\documentclass[reqno,11pt]{amsart}
\usepackage[top=2cm]{geometry}
\usepackage{amssymb}
\usepackage{amsthm}
\usepackage{yhmath}
\usepackage{amsfonts}
\usepackage{mathtools}
\usepackage{appendix}
\usepackage[dvipsnames]{xcolor}
\usepackage{enumitem}
\usepackage[colorlinks, 
linkcolor=blue,
anchorcolor=magenta,
citecolor=blue
]{hyperref}
\usepackage{orcidlink}

\newcommand{\norm}[2]{\Vert #1 \Vert_{#2}}

\DeclareMathOperator{\diag}{diag}
\DeclareMathOperator{\Div}{div}

\DeclareMathOperator{\curl}{curl}
\usepackage[myheadings]{fullpage}

\theoremstyle{plain}
\newtheorem{theorem}{Theorem}[section]

\newtheorem{lemma}[theorem]{Lemma}
\newtheorem{proposition}[theorem]{Proposition}
\theoremstyle{definition}

\newtheorem{remark}[theorem]{Remark}

\title[Inviscid shallow water equations]{Local Well-Posedness of the Inviscid Shallow Water Equations with Surface Tension}
\date{\today}
\author[]{Xin Liu \orcidlink{0000-0001-7021-8644}}
\address[Xin Liu]{
    Department of Mathematics, Texas A\$M University, College Station, TX, 77843, USA.
}
\email{xliu23@tamu.edu}

\author[]{Bingheng Yang$^*$ \orcidlink{0009-0005-0380-1596}}
\address[Bingheng Yang]{
	Department of Mathematics, Texas A\&M University, College Station, TX, 77843, USA.
}
\email{bhyang@tamu.edu}

\numberwithin{equation}{section}

\begin{document}

\begin{abstract}
We show that the inviscid shallow water equations with gravity and surface tension 
are locally well-posed in dimensions one and two. In contrast to previous works by Benzoni-Gavage, Danchin, and Descombes \cite{BenzoniGavageDanchinDescombes1D,BenzoniGavageDanchinDescombes3D}, we work only in the real-valued phase space and make use of the symmetry of the system. Our alternative proof for local well-posedness sheds some light on the symmetric structure of the shallow water system and can be used for further study, including designing numerical scheme, dispersion estimate, etc.

\smallskip 

{\noindent\bf Keyboards:} Euler-Korteweg, shallow water equations, surface tension, local well-posedness, symmetric systems.

{\noindent\bf MSC2020:} 35Q31, 35Q35, 76N10, 76B15, 76B45

\end{abstract}

\maketitle

{
  \hypersetup{linkcolor=blue}
  \tableofcontents
}

\section{Introduction}
We consider the system, 
\begin{subequations}
\label{ShallowWaterEquations}
\begin{align}
\label{MaSs}
\partial_t h+\Div(hu)& = 0 \quad && \text{in} \quad \mathbb{R}^n, \\
\label{MoMeNtUm}
\partial_t u+u\cdot\nabla u+\nabla h-\nabla \Delta h & =0 \quad &&\text{in} \quad \mathbb{R}^n, \\
(h,u)\vert_{t=0} & =(h_0,u_0), 
\end{align}
\end{subequations}
in dimensions $ n = 1,2 $. We can view \eqref{ShallowWaterEquations} as a model for shallow water equations with surface tension. Here $h > 0$ denotes the height, $u \in \mathbb R^n$ denotes the velocity of the water wave, the pressure term  $\nabla h$ denotes the effect of gravity, and the third order term $-\nabla\Delta h$ arises from surface tension. Physically, \eqref{MaSs} and \eqref{MoMeNtUm} stand for the conservation of mass and momentum, respectively. The shallow water equations \eqref{ShallowWaterEquations} were formally derived from the free boundary problem for the incompressible water wave equations by the first author and his collaborators in \cite{LiLiuPeschka}, and were also previously known in the review article \cite{BreschDesjardinsMetivier} and the monograph \cite{Bresch}. As a shallow water model, the validity of \eqref{ShallowWaterEquations} was justified by Bresch and Noble in \cite{BreschNoble} in some scales. 

\smallskip
\begin{remark}
For $ n \geq 3 $, one should interpret \eqref{ShallowWaterEquations} as a special case of the Euler--Korteweg system \cite{BenzoniGavageDanchinDescombes3D, AntonelliMarcati1,BertiMasperoMurgante}, which models a fluid with quantum effects. See Section \ref{sec:irrotational-flow} for our remark about the irrotation flows in $ n $ dimension. 
\end{remark}

\subsection{Main result}
Our goal is to prove local well-posedness for \eqref{ShallowWaterEquations}. We note that the cases of $ n = 1 $ and $ n = 2 $ admit fundamentally different structures. Therefore, we will treat them separately.   

In the case of $ n = 1 $, system \eqref{ShallowWaterEquations} is reduced to the following:
\begin{subequations}
    \label{sys:1-d_sw}
    \begin{align}
        \label{1-d-Mass}
        \partial_t h + \partial_x(hu) & = 0 \quad &&\text{in} \quad  \mathbb R, 
        \\
        \label{1-d-momentum}
        \partial_t u+ u \partial_x  u + \partial_x h - \partial_x^3 h & = 0 \quad && \text{in} \quad \mathbb R,\\
        \label{1-d-initial}
        (h,u)\vert_{t=0} & = (h_0,u_0). 
    \end{align}
\end{subequations}

\begin{theorem}
    \label{thm:1-d} Consider initial data $ (h_0, u_0)$ for system \eqref{sys:1-d_sw}, with
\begin{equation}
\label{h_initial-bound}
    0 < \underline h := \inf_{x} h_{0}(x) < 1 < \overline h := \sup_x h_{0}(x) < \infty,
\end{equation}
and
\begin{equation}
    \label{1-d-initial-regularity}
    h_0 - 1 \in H^3(\mathbb R), \quad  u_0 \in H^2(\mathbb R). 
\end{equation}
Then there exists $ T \in (0,\infty) $, depending only on the initial data, such that there exists a unique solution $ (h-1,u) \in L^\infty([0,T];H^3(\mathbb R)) \times L^\infty([0,T];H^2(\mathbb R)) $ to system \eqref{sys:1-d_sw}, with 
    \begin{equation}
        \label{1-d-a-priori-estimate}
        \sup_{0\leq t\leq T} \sum_{
        0\leq j\leq 2
        }\norm{\partial_x^j(h(t)-1),\partial_x^j u(t), \partial_x^{j+1} h(t) }{L^2} < C <\infty,
    \end{equation}
    for some constant $ C \in (0,\infty) $ depending only on the initial data. 
\end{theorem}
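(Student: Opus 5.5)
We will prove Theorem \ref{thm:1-d} by uniform energy estimates for a regularized system, followed by a compactness argument and an $L^2$ contraction estimate for uniqueness. The estimates rest on a symmetrization of \eqref{sys:1-d_sw} in real variables. Introduce $w := \partial_x h$ and differentiate \eqref{1-d-Mass} once:
\begin{equation*}
\partial_t w + u\partial_x w + h\partial_x^2 u + 2w\partial_x u = 0,\qquad \partial_t u + u\partial_x u + w - \partial_x^2 w = 0 .
\end{equation*}
The principal part couples $h\partial_x^2 u$ with $-\partial_x^2 w$. This is skew-symmetric once the $u$-equation is weighted by $h$. Equivalently, we use the energy
\begin{equation*}
E_0 = \int \Bigl( |\partial_x w|^2 + h|\partial_x u|^2 \Bigr)\,dx ,
\end{equation*}
and its higher analogues $E_k=\int (|\partial_x^{k+1}w|^2 + h|\partial_x^{k+1}u|^2)$, added to the lower-order quantities $\norm{h-1}{L^2}^2,\norm{u}{L^2}^2$. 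The top-order control needed is $\partial_x^{3}h=\partial_x^2 w\in L^2$ together with $\partial_x^2 u\in L^2$, which is the case $k=1$.

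The key computation is this. Apply $\partial_x^{k}$ and pair the $w$-equation with $-\partial_x^{k+2} w$ (that is, pair $\partial_x^{k+1}$ of it with $\partial_x^{k+1}w$ after integration by parts). Pair the $u$-equation with $h\partial_x^{k}u$ at the level $\partial_x^{k+1}$. The dangerous terms $\int h\,\partial_x^{k+2}u\,\partial_x^{k+2}w$ cancel exactly against those coming from $-\int h\,\partial_x^{k+3}w\,\partial_x^{k+1}u$ after one integration by parts. The remaining commutators contain $\partial_x h\,\partial_x^{k+2}w\,\partial_x^{k+1}u$, which is harmless because $w=\partial_x h$ is bounded. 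Transport terms $u\partial_x$ are treated by integration by parts and cost $\norm{\partial_x u}{L^\infty}$. The time derivative of the weight produces $\int \partial_t h |\partial_x^{k+1}u|^2$, bounded using \eqref{1-d-Mass}. The one delicate commutator is $[\partial_x^{k+1},h]\partial_x^2u$. Its worst piece, $(k+1)\partial_x h\,\partial_x^{k+2}u$ paired with $\partial_x^{k+1}w$, has one derivative too many on $u$. We handle it by adding a lower-order corrector to the energy, or by integrating by parts in $x$ to move the derivative onto $\partial_x h\,\partial_x^{k+1}w$, which lands in $\partial_x^{k+2}w$, still controlled by $E_k$.

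Since $h$ stays near $[\underline h,\overline h]$ for short time by continuity, the weight $h$ is bounded below. We obtain $\frac{d}{dt}\mathcal E \le C(\mathcal E)$ with $\mathcal E$ equivalent to the square of the quantity in \eqref{1-d-a-priori-estimate}. Gronwall then gives a time $T$ and a bound $C$ depending only on the data. We expect this weighted top-order estimate, and closing the commutator with $\partial_x h\,\partial_x^{k+2}u$, to be the main obstacle.

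For existence, we regularize, for instance by mollifying the nonlinear terms with $J_\epsilon$ in a way that preserves the symmetric structure (so $J_\epsilon$ is applied symmetrically), and solve the resulting ODE in the Banach space $H^3\times H^2$. The uniform estimates give a common $T$. Aubin--Lions compactness then yields a limit solving \eqref{sys:1-d_sw} in the class of Theorem \ref{thm:1-d}, with $h\ge \underline h/2>0$. For uniqueness, we take two solutions and estimate the difference $(\delta w,\delta u)$ in the weighted energy $\int |\delta h|^2+|\delta w|^2+h_1|\delta u|^2$, one derivative below the top. The same cancellation applies, and the difference terms are bounded by the $H^3\times H^2$ norms of the solutions. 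Gronwall gives $\delta\equiv 0$.
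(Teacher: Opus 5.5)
\textbf{There is a genuine gap at the top-order energy estimate.} You flag it yourself as the main obstacle, but with the weights $(1,h)$ it does not close, and neither of your proposed repairs works.

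The cancellation of $\int h\,\partial_x^{k+2}u\,\partial_x^{k+2}w$ is correct. The leftover $\int\partial_x h\,(\partial_x^{k+2}w\,\partial_x^{k+1}u-\partial_x^{k+2}u\,\partial_x^{k+1}w)$ is not harmless, however. Both $\partial_x^{k+2}w$ and $\partial_x^{k+2}u$ lie one derivative above what $E_k$ controls, so the boundedness of $\partial_x h$ does not help. Two further terms lose a derivative in the same way: the commutator piece $(k+1)\partial_x h\,\partial_x^{k+2}u$, and the term $2w\,\partial_x^{k+2}u$ coming from $\partial_x^{k+1}(2w\,\partial_x u)$, which you did not mention. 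Set $W=\partial_x^{k+1}w$ and $V=\partial_x^{k+1}u$. Collecting all of these gives
\[
\frac12\frac{d}{dt}\int\bigl(W^2+hV^2\bigr)\,dx=-(k+1)\int\partial_x h\,W\,\partial_x V\,dx+(\text{controlled terms}).
\]
At the top level $k=1$ this is $-2\int\partial_x h\,\partial_x^3h\,\partial_x^3u$, which your energy does not control. Integrating by parts only moves the extra derivative onto $\partial_x^{k+1}w$, producing $\partial_x^{k+2}w=\partial_x^{k+3}h$. That is not in $E_k$ either, so the claim that it is ``still controlled by $E_k$'' is the error. The weights $(1,h)$ symmetrize the system only at the undifferentiated level of $(w,u)$, where the leftover coefficient $k+1$ vanishes.

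\textbf{What is missing is a $k$-dependent change of the top-order weights (a gauge), not a lower-order corrector.} Take the energy $\int a(h)W^2+b(h)V^2$. The second-order cross term cancels if and only if $b=ah$. The first-order leftover then has coefficient $2(ah)'-(k+3)a$, which vanishes for $a=h^{(k+1)/2}$ and $b=h^{(k+3)/2}$. So at $k=1$ the energy must be $\int h\,|\partial_x^3h|^2+h^2|\partial_x^2u|^2$.

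The paper obtains exactly this weighting without computing it. In the variables $(h-1,m,\partial_x h)$ with $m=hu$, the dispersive coupling has the constant skew-symmetric coefficient $A_2$. The paper therefore builds its energy from time derivatives (for the original system, just $U$ and $\partial_t U$). These commute with $A_2$, and their commutators with $A_0=\diag(1,1/h,1)$ and $A_1$ cost only first derivatives of $U$. It then recovers $\partial_x^3h$ and $\partial_x^2m$ from $\partial_t m$ and $\partial_t\partial_x h$ via Lemma \ref{EllipticLemma}. Up to lower-order terms, $\norm{\partial_t m/\sqrt h}{L^2}^2+\norm{\partial_t\partial_x h}{L^2}^2=\int h|\partial_x^3h|^2+h^2|\partial_x^2u|^2$, which is precisely the gauge above.

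Once the top-order weights are corrected, your regularization and compactness steps can go through. Your uniqueness estimate at the $L^2$ level of $(\delta w,\delta u)$ with weights $(1,h_1)$ is fine as stated.
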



Meanwhile, in the case of $ n=2 $, we will show
\begin{theorem}
    \label{MainTheoremLWP}
 Consider initial data $ (h_0, u_0) $ for system \eqref{ShallowWaterEquations}, with
\begin{equation}
    0 < \underline h := \inf_{x} h_{0}(x) < 1 < \overline h := \sup_x h_{0}(x) < \infty,
\end{equation}
and with regularity
\begin{equation}
    \label{n-d-initial-regularity}
    h_0 - 1 \in H^{7}(\mathbb R), \quad  u_0 \in H^{6}(\mathbb {R}^2). 
\end{equation}
Then there exists $ T \in (0,\infty) $ depending only initial data, such that there exists a unique solution $ (h-1,u) \in L^\infty([0,T];H^{4}(\mathbb R)) \times L^\infty([0,T];H^{3}(\mathbb {R}^2)) $ to system \eqref{ShallowWaterEquations}, with 
    \begin{equation}
        \sup_{0\leq t\leq T} \sum_{0\leq j\leq 3}\norm{\nabla^{j}(h(t)-1),\nabla^{j} u(t), \nabla^{j+1} h(t) }{L^2} < C <\infty,
    \end{equation}
    for some constant $ C \in (0,\infty) $ depending only on the initial data.
\end{theorem}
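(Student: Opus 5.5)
The plan is to symmetrize the system in the real phase space. I will introduce the auxiliary unknown $w := \nabla h$ (the "capillary velocity") and rewrite \eqref{ShallowWaterEquations} as a first-order-in-time system for $(h,u,w)$. Differentiating \eqref{MaSs} gives
\begin{equation*}
\partial_t w + \nabla(u\cdot w) + \nabla(h\,\Div u) = 0 ,
\end{equation*}
while \eqref{MoMeNtUm} reads $\partial_t u + u\cdot\nabla u + w - \nabla \Div w = 0$, using $\Delta h = \Div w$. The principal part couples $u$ and $w$ through $\nabla(h\Div u)$ and $-\nabla\Div w$. With the weighted energy
\begin{equation*}
E_0 = \tfrac12\int \bigl( (h-1)^2 + |u|^2 + |w|^2 + h\,|\Div w|^2\bigr)\,dx ,
\end{equation*}
or more precisely with a weight chosen so that the pairings $\int \nabla(h\Div u)\cdot \Div w$-type terms and $\int \nabla \Div w\cdot u$-type terms cancel after integration by parts, the leading third-order terms become skew-adjoint. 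Here I use that $w$ is a gradient, so $\curl w = 0$ and $\nabla\Div w = \Delta w$ in $n=2$.

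In $n=1$ this is cleanest. I set $w=\partial_x h$. Then $\partial_t w + \partial_x(uw) + \partial_x(h\partial_x u)=0$ and $\partial_t u + u\partial_x u + w - \partial_x^2 w = 0$. I take $\partial_x^j$, $j\le 2$, of the $u$-equation and pair it with $\partial_x^j u$. I take $\partial_x^j$ of the $w$-equation and pair it with $h^{-1}\partial_x^j w$ plus an extra term $\partial_x^{j+1} w$ weighted by $1$, choosing weights so that the top-order terms $\partial_x^{j+2}w\,\partial_x^j u$ and $\partial_x^{j+1}(h\partial_x^{j+1}u)\,\partial_x^{j+1}w$ cancel exactly. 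The remaining commutators involve at most $\|\partial_x h\|_{W^{1,\infty}}$, $\|\partial_x u\|_{L^\infty}$, and $\|h^{-1}\|_{L^\infty}$, so they are controlled by the energy via Sobolev embedding. Transport terms $u\partial_x(\cdot)$ are handled by one integration by parts. This yields $\frac{d}{dt}E \le C(E,\underline h^{-1})E$. Positivity of $h$ is propagated along characteristics of \eqref{1-d-Mass} on a short time, using $h\ge \underline h/2$ as a bootstrap. Together these give \eqref{1-d-a-priori-estimate} on $[0,T]$.

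In $n=2$ the same scheme applies at order $j\le 3$. The extra difficulty, which I expect to be the main obstacle, is the lack of a clean cancellation. The weight $h$ in $\nabla(h\Div u)$ does not commute with the full gradient structure, and the vorticity $\curl u$ is not controlled by the coupled skew structure. I would handle this by adding a separate estimate for $\omega=\curl u$, which satisfies a transport equation $\partial_t\omega + u\cdot\nabla\omega + \omega\Div u=0$ since $\curl w=0$. I would control the divergence part through the symmetric $(h,\Div u, w)$ energy with weight $h^{-1}$ on the $\Div w$ component. Commutators with variable coefficients lose derivatives relative to the energy, which is why extra initial regularity is assumed. I would close the estimates in $H^4\times H^3$ using bounds on a higher norm at a coarser level (propagation of the $H^7\times H^6$ norm with a possibly growing but finite bound) to control $L^\infty$ norms of high derivatives.

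Existence will come from a standard approximation. I would use either Friedrichs mollification of the symmetrized system, which preserves the skew structure, or a Galerkin scheme, with uniform bounds from the energy estimates above and compactness (Aubin–Lions) to pass to the limit. Uniqueness follows from an $L^2$-level energy estimate for the difference of two solutions at the lowest order (the $E_0$ structure), closed by Grönwall using the $L^\infty$ bounds of both solutions.
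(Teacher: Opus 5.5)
Your plan has a genuine gap at the sub-principal level, and your proposed fix for it in 2D is circular. With the weights you specify ($1$ on $\partial^\alpha u$, $h^{-1}$ on $\partial^\alpha w$), only the top-order pairing cancels. Write the $w$-equation as $\partial_t w+(u\cdot\nabla)w+(\nabla u)^\top w+w\Div u+h\nabla\Div u=0$ and apply $\partial^\alpha$ with $|\alpha|=j$. This produces $(\nabla\partial^\alpha u)^\top w$, $w\Div\partial^\alpha u$, and the commutator $\sum_{|\beta|=1}\partial^\beta h\,\nabla\Div\partial^{\alpha-\beta}u$. Each carries $j+1$ derivatives of $u$ and is paired with $h^{-1}\partial^\alpha w$. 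Already in 1D these terms combine into
\[
-(j+2)\int h^{-1}\,\partial_x h\,\partial_x^{j+1}u\,\partial_x^{j}w\,dx .
\]
Nothing from the $u$-equation cancels this, because its dispersive term $-\nabla\Div w$ has constant coefficients. At the top order ($j=2$ in 1D, $|\alpha|=3$ in 2D) it therefore costs one derivative more than the energy controls. So your claim that the remaining commutators involve only $\norm{\partial_x h}{W^{1,\infty}}$, $\norm{\partial_x u}{L^\infty}$ and $\norm{h^{-1}}{L^\infty}$ is false: those are the coefficients, but they multiply $\partial_x^{j+1}u$. In 1D you can remove the term with $j$-dependent weights, e.g. $\int h^{j/2+1}|\partial_x^j u|^2+h^{j/2}|\partial_x^j w|^2$. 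That is exactly the Benzoni-Gavage--Danchin--Descombes-type weighted energy the paper avoids. In 2D you supply no such structure. Instead you propose to absorb the loss by propagating the $H^7\times H^6$ norm, but that requires an energy estimate at the higher level, which has the same loss, so nothing closes. Two smaller points. Estimating $\nabla^3\omega$ from $\partial_t\omega+u\cdot\nabla\omega+\omega\Div u=0$ produces $\omega\,\nabla^3\Div u$, again a loss; you should use the purely transported $\theta=\curl u/h$. Also, your $L^2$ difference estimate with weight $h^{-1}$ on $\delta w$ contains terms like $\int h^{-1}\nabla h\cdot\delta w\,\Div\delta u$; the loss-free weights at that level are $h$ on $u$ and $1$ on $w$.

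The paper sidesteps all of this by using $m=hu$ and $U=(h-1,m,\nabla h)$. Then $\partial_t\nabla h+\nabla\Div m=0$ has constant coefficients, and $M_3,M_4,M_5$ are constant skew matrices. The variable coefficients sit only in $M_0=\diag(1,h^{-1},h^{-1},1,1)$ and in first-order matrices that depend on $U$ but not on its derivatives. The key point is that the energy $\mathcal E_2$ contains all mixed derivatives $\partial_t^i\nabla^j U$ with $i+j\le 3$. Commutators such as $[\partial_t^i\partial^j,M_0]\partial_t U$ then have total order at most $3$ and lie in the energy, so there is no loss. The non-symmetric part of $M_1,M_2$ is split off as $K'$, which consists of two kinds of terms. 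The vorticity terms $\nabla_{\mathrm{skw}}m$ are controlled through the transported $\theta=\curl u/h$. The dual-vorticity terms $\nabla^\perp m_j$, after pairing with $\partial_H m$, become $\curl\partial_H m$ via $\int\nabla^\perp f\cdot g=-\int f\curl g$. The $H^7\times H^6$ hypothesis is used only to make $\mathcal E_2(0)$ finite: $\partial_t^3 m(0)$ involves seven derivatives of $h_0$, and $\partial_t^3\nabla h(0)$ involves six derivatives of $m_0$. It does not compensate for any loss.
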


To prove Theorems \ref{thm:1-d} and \ref{MainTheoremLWP}, we will make use of the symmetric structure of system \eqref{ShallowWaterEquations}, which can be seen via the momentum variable $m:=hu.$ Indeed, one can write down the following equivalent system of the shallow water equations using $ (h,m) $: 
    \begin{subequations}
    \label{ShallowWaterMomentum}
    \begin{align}
    \label{ShallowWaterMomentumMass}
        \partial_t h+ \Div  m  & =0, \\
        \label{ShallowWaterMomentumMomentum}
        \frac{1}{h}\partial_t m +\frac{m}{h^2}\cdot\nabla m+ \frac{m}{h^2}\Div m+\nabla h-\nabla \Delta h & =\frac{(m\cdot\nabla)h}{h^3}  m, \\
        \label{ShallowWaterSurfaceGradient}
        \partial_t \nabla h+\nabla \Div m & =0.
    \end{align}
    \end{subequations}

\smallskip 

To explain the idea, in the case of $ n = 1 $, i.e., in one spatial dimension, let $ U:= (h-1, m, \partial_x h)^\top \in \mathbb R^3 $. One can write system \eqref{sys:1-d_sw} (or equivalently, system \eqref{ShallowWaterMomentum}) as
\begin{subequations}
\label{ShallowWaterMomentum1DMatrix}
    \begin{gather}
    \begin{pmatrix}
        1 & 0 & 0 \\
        0 & \frac{1}{h} & 0 \\
        0 & 0 & 1 \\
    \end{pmatrix}\partial_t U+\begin{pmatrix}
        0 & 1 & 0 \\
        1 & \frac{2m}{h^2} & 0 \\
        0 & 0 & 0 \\
    \end{pmatrix}\partial_x U+\begin{pmatrix}
        0 & 0 & 0 \\
        0 & 0 & -1 \\
        0 & 1 & 0 \\
    \end{pmatrix}\partial_x^2 U=\begin{pmatrix}
        0 \\
        \frac{m^2}{h^3}\partial_x h \\
        0 \\
    \end{pmatrix}, \\
    U(0,x)=(h_0-1,m_0,\partial_x h_0).
\end{gather}
\end{subequations}
One can see that system \eqref{ShallowWaterMomentum1DMatrix} resembles a symmetric hyperbolic system, with additional second order terms. In particular, the associated linear operator generates a bounded group, and therefore there is no loss of regularity; see Lemma \ref{LemmaPhi}, below. Due to this symmetry, the local well-posedness of the generalized system \eqref{ShallowWaterV} follows easily from the method of vanishing viscosity. We refer to Section \ref{vv} for the complete argument. At last, in Section \ref{Sect:WP}, we show the well-posedness of system \eqref{ShallowWaterMomentum1DMatrix} (or equivalently, system \eqref{sys:1-d_sw}) using an approximating argument.

\smallskip 

However, in dimension two, the symmetric structure of the shallow water system \eqref{ShallowWaterEquations} differs from that in \eqref{ShallowWaterMomentum1DMatrix}. Indeed, only the acoustic component of the flow admits a symmetric structure, while the vorticity component of the flow is transported. See Section \ref{sec:2d} for the detailed formulation of $ n = 2 $, as well as the \emph{a priori} estimates for the 2d system \eqref{ShallowWaterMomentum2D-1}. 
In Section \ref{Regularity-remark} we explain why the approximating argument in Section \ref{Sect:WP} cannot be directly applied to the 2d case without additional regularity assumptions.
Finally, in Section \ref{sec:irrotational-flow}, we remark about a result for irrotational flows in dimensions $\geq 3$. 


\subsection{Background and literature on inviscid water waves with gravity and surface tension} 
For inviscid shallow water equations \emph{without} surface tension, or for \emph{viscous} shallow water equations with surface tension, we refer the reader to Bresch's book \cite{Bresch}, Lannes' works \cite{LannesBook,Lannes} and the review article by Bresch--Desjardins--M{\'e}tivier \cite{BreschDesjardinsMetivier} for an extensive overview on the mathematical properties of shallow water models in the absence/presence of boundary. In particular, Bresch and Noble in \cite{BreschNoble} rigorously justified the validity of \eqref{ShallowWaterEquations} in smooth function spaces.


Much more mathematical studies on inviscid water waves are available without the assumption of shallowness. Here we highlight the following works: Wu \cite{WUI,WUII,WUIII,WUIV}, Coutand--Shkoller \cite{CoutandShkoller}, Alazard--Burq--Zuily \cite{AlazardBurqZuily1,AlazardBurqZuily2,AlazardBurqZuily3} and Deng--Ionescu--Pausader--Pusateri \cite{IonescuPusateriI,IonescuPusateriII,IonescuPusateriIII,DengIonescuPausaderPusateri} managed to show local and global well-posedness of incompressible irrotational water waves with or without surface tension using various methods, whereas Castro--C{\'o}rdoba--Fefferman--Gancedo--G{\'o}mez-Serrano \cite{Castro1,Castro2} investigated the finite-time formation of splash singularities. Finally, we note that Ambrose--Masmoudi \cite{AmbroseMasmoudi1,AmbroseMasmoudi2} and Agrawal \cite{Agrawal} investigated the zero surface tension limit problem. 


    We remark here that Theorem \ref{thm:1-d} and Theorem \ref{MainTheoremLWP} corresponds to the $K=1$ case for the Euler--Korteweg system in \cite[Theorem 1.1]{BenzoniGavageDanchinDescombes3D}. The proof in \cite{BenzoniGavageDanchinDescombes3D} relies on finding the good variable 
    \[
    w:=\frac{\nabla h}{\sqrt{h}},
    \]
    then complexifying the equations and performing rather involved weighted energy estimates. Moreover, the argument in \cite{BenzoniGavageDanchinDescombes3D} requires a vanishing viscosity argument using the hyperdissipative term $-\varepsilon \Delta^2$, for which uniform-in-$\varepsilon$ estimates are more difficult to establish. We provide a shorter and more physically intuitive proof based on another good variable \eqref{ThetaVariable}, which in turn gives more insights to the hidden symmetric structure of \eqref{ShallowWaterEquations}. In addition, we believe that our approach based on (almost) symmetric systems can be useful for further numerical studies of shallow water waves. See for instance \cite{BreschEtAl,EiterGiesselmannLasarzikOffnerSauerborn,Guermond1, Guermond2,Guermond3,NobleVila,Xing,XingShu,XingZhangShu} for numerical studies of shallow water models and of the Euler--Korteweg system.

\subsection{Notations}
\label{Notations}
In this paper, we denote $A\lesssim B $ if
\[
A\leq CB,
\]
for some generic constant $C \in (0,\infty)$ that may be different from line to line. We will use subscripts for $ C $ to emphasize the dependency. \\

In addition, for $f_1,\cdots,f_n\in X$, we denote
\[
\Vert f_1,\cdots,f_n\Vert_{X}^2:=\sum_{j=1}^n\Vert f_j\Vert_{X}^2.
\]
Here $ X $ is some Banach space. We denote
\[
\int (\cdot)d\vec{x}:=\int_{\mathbb R^n}(\cdot) d\vec{x}=\int_{\mathbb R^n} (\cdot) dx_1\ldots dx_n,
\]
for integrals on $\mathbb R^n$, and the pairing
\[
(\cdot,\cdot)
\]
stands for the standard $L^2$ inner product. We will use
\begin{equation}
    \label{partial-z}
    \partial \in \{\partial_{x_1},\partial_{x_2},\cdots,\partial_{x_n}\},
\end{equation}
to denote a spatial derivative along arbitrary direction. Lastly, the notation
\[
(\cdot)^\top
\]
stands for the transpose of a matrix.

\section{\textit{A priori} estimates in 1D}
\label{1DEnergyEstimates}
In this section, we focus on the {\it a priori} energy estimates for the 1d shallow water system. We write \eqref{ShallowWaterMomentum1DMatrix} as
\begin{equation}
\label{ShallowWaterMomentumConsice}
A_0(U)\partial_t U+A_1(U)\partial_x U+A_2\partial_x^2 U=S(U),
\end{equation}
where
\begin{equation}
    \label{def:U}
    U := \begin{pmatrix}
        h - 1 \\ m \\ \partial_x h
    \end{pmatrix},
\end{equation}
and
\begin{equation}\label{def:As}
\begin{gathered}
    A_0(U) := \begin{pmatrix}
        1 & 0 & 0 \\
        0 & \frac{1}{h} & 0 \\
        0 & 0 & 1 \\
    \end{pmatrix}, \quad A_1(U):=\begin{pmatrix}
        0 & 1 & 0 \\
        1 & \frac{2m}{h^2} & 0 \\
        0 & 0 & 0 \\
    \end{pmatrix}, \quad A_2:=\begin{pmatrix}
        0 & 0 & 0 \\
        0 & 0 & -1 \\
        0 & 1 & 0 \\
    \end{pmatrix},  \\
    S(U):=\begin{pmatrix}
        0 \\
        \frac{m^2}{h^3}\partial_x h \\
        0 \\
    \end{pmatrix}. 
    \end{gathered}
\end{equation} 
We will make use of the symmetric structure of equation \eqref{ShallowWaterMomentumConsice}.

\subsection{\textit{A priori} assumption,  the generalized system, and the initial data}
\label{subsec:generalized-system-initial}

Instead of \eqref{ShallowWaterMomentumConsice},
it is more convenient to consider a generalized system.
Let
\begin{equation}
\label{Def:V}
\begin{gathered}
V:=\begin{pmatrix}
    \eta -1 \\
    \omega \\
    \phi
\end{pmatrix}, \quad A_0=A_0(V):=\begin{pmatrix}
    1 & 0 & 0 \\
    0 & \frac{1}{\eta} & 0 \\
    0 & 0 & 1
\end{pmatrix}, \quad A_1=A_1(V):=\begin{pmatrix}
    0 & 1 & 0 \\
    1 & \frac{2\omega}{\eta^2} & 0 \\
    0 & 0 & 0
\end{pmatrix},  \\
{S}_{\phi} = S_\phi(V):=\begin{pmatrix}
    0 \\
    \frac{m^2}{\eta^3}\phi \\
    0 \\
\end{pmatrix},
\end{gathered}
\end{equation}
and the generalized system
\begin{equation}
    \label{ShallowWaterV}
    A_0\partial_t V+A_1\partial_x V+A_2\partial_x^2 V={S}_{\phi}.
\end{equation}
Notice that, as long as the solutions to \eqref{ShallowWaterMomentumConsice} and \eqref{ShallowWaterV} exist with the same initial data and are sufficient regular, it is easy to prove $ U(x,t) = V(x,t) $ for all $ 0 \leq t \leq T $ and all $x\in\mathbb{R}$, where $ T $ is the maximal existing time.  

For the generalized system \eqref{ShallowWaterV}, consider the energy functional
\begin{equation}
    \label{Energy}
    \mathcal E(t):= \sum_{\substack{i+j\leq 2,\\ i,j \geq 0}} \Vert\partial_t^i\partial_x ^j (\eta-1),{\partial_t^i\partial_x^j \omega},\partial_t^i\partial_x^{j}\phi\Vert_{L^2}^2,
\end{equation}
and the intermediate energy
\begin{equation}
    \label{intrmd-energy}
    \mathfrak E(t): = \sum_{\substack{i+j\leq 2,\\ i,j \geq 0}} (A_0(V) \partial_t^i \partial_x^j V (t), \partial_t^i \partial_x^j V(t)). 
\end{equation}
In particular, provided that 
\begin{equation}
    \label{non-degenerate}
    0<\frac{1}{2}\underline h \leq \inf_{x,t} \eta(x,t) \leq \sup_{x,t} \eta(x,t) \leq 2 \overline h, 
\end{equation}
there exists a constant $ C_{\underline h, \overline h} \in (0,\infty) $ depending on $ \underline h, \overline h $, such that
\begin{equation}
    \label{energy-eqvlt}
    \frac{1}{C_{\underline h, \overline h}}\mathcal E(t) \leq \mathfrak E(t) \leq C_{\underline h, \overline h} \mathcal E(t).
\end{equation}
Moreover, at $ t = 0 $, we choose initial data with regularity
\begin{equation}\label{V-initialdata} (\eta-1,\omega, \phi)\vert_{t=0} := (\eta_0 -1, \omega_0, \phi_0 ) \in H^3(\mathbb{R}) \times H^4(\mathbb{R}) \times H^4(\mathbb{R}), \end{equation} 
as the initial data for $ \partial_t \eta, \partial_t \omega, \partial_t \phi, \partial_{tt} \eta, \partial_{tt} \omega, \partial_{tt} \phi $ are explicitly given by
\begin{equation}
\label{initial_data_time}
\begin{aligned}
    \partial_t \eta(t=0) & = - \partial_x \omega_0,\\
    \partial_t \omega(t=0) &= - \frac{2\omega_0\partial_x \omega_0}{\eta_0}  - \eta_0 \partial_x \eta_0 + \eta_0 \partial_{x}^2 \phi_0 + \frac{\omega_0^2 \partial_x \eta_0}{\eta_0^2}, \\ 
    \partial_t \phi (t=0) & = - \partial_x^2 \omega_0, \\
    \partial_t^2 \eta(t=0) &= -2\frac{\omega_0 }{\eta_0^2} \phi_0\partial_x \omega_0-\frac{\omega_0^2}{\eta_0^2 }\partial_x \phi_0+\frac{2\omega_0^2}{\eta_0^3}\phi_0\partial_x\eta_0-\partial_x\eta_0 \partial_x^2\phi_0-\eta_0\partial_x^3 \phi_0+(\partial_x\eta_0)^2  \\
    & +\eta_0\partial_x^2 \eta_0+\frac{2(\partial_x\omega_0)^2}{\eta_0}+\frac{2\omega_0}{\eta_0}\partial_x^2 \omega_0-\frac{2\omega_0}{\eta_0^2 } \partial_x \eta_0\partial_x \omega_0, \\
     \partial_t^2 \phi (t=0) &= \partial_x \partial_t^2 \eta(t=0) \\
     & = -\frac{2\phi_0}{\eta_0^2 } (\partial_x \omega_0)^2 -\frac{4\omega_0}{\eta_0^2 } \partial_x \phi_0 \partial_x \omega_0-2\frac{\omega_0}{\eta_0^2 } \phi_0\partial_x^2 \omega_0+4\frac{\omega_0}{\eta_0^3} \phi_0\partial_x \eta_0 \partial_x \omega_0 -\frac{\omega_0^2}{\eta_0^2}\partial_x^2 \phi_0\\
     &+\frac{4\omega_0^2}{\eta_0^3} \partial_x \eta_0\partial_x \phi_0+\frac{4\omega_0}{\eta_0^3}\phi_0\partial_x \omega_0\partial_x \eta_0+\frac{2\omega_0^2}{\eta_0^3}\phi \partial_x^2 \eta_0 -\frac{6\omega_0^2}{\eta_0^4}\phi_0 (\partial_x \eta_0)^2-\partial_x^2\eta_0 \partial_x^2 \phi_0\\
     & -2\partial_x \eta_0 \partial_x^3 \phi_0-\eta_0\partial_x^4 \phi_0+3\partial_x\eta_0\partial_x^2\eta_0+\eta_0\partial_x^3 \eta_0+\frac{6}{\eta_0}\partial_x\omega_0\partial_x^2\omega_0-\frac{2(\partial_x\omega_0)^2}{\eta_0^2}\partial_x\eta_0+\frac{2\omega_0}{\eta_0}\partial_x^3\omega_0 \\
     & -\frac{2\omega_0}{\eta_0^2}\partial_x^2 \omega_0-\frac{2}{\eta_0^2}\partial_x\eta_0 (\partial_x\omega_0)^2-\frac{2\omega_0}{\eta_0^2}\partial_x^2\eta_0\partial_x \omega_0-\frac{2\omega_0}{\eta_0^2}\partial_x\eta_0\partial_x^2 \omega_0+\frac{4\omega_0}{\eta_0^3}(\partial_x \eta_0)^2 \partial_x \omega_0,\\
     \partial_t^2 \omega (t=0) &=-\frac{2\partial_x \omega_0}{\eta_0}\left(- \frac{2\omega_0\partial_x \omega_0}{\eta_0}  - \eta_0 \partial_x \eta_0 + \eta_0 \partial_{x}^2 \phi_0 + \frac{\omega_0^2 \partial_x \eta_0}{\eta_0^2}\right)\\
     &-\frac{2\omega_0}{\eta_0}\partial_x \left(- \frac{2\omega_0\partial_x \omega_0}{\eta_0}  - \eta_0 \partial_x \eta_0 + \eta_0 \partial_{x}^2 \phi_0 + \frac{\omega_0^2 \partial_x \eta_0}{\eta_0^2}\right)\\
     &-\frac{2\omega_0\partial_x \omega_0}{\eta_0^2}\partial_x\omega_0+\partial_x\eta_0\partial_x \omega_0 -\partial_x\omega_0\partial_x^2\phi_0-\eta_0\partial_x^4 \omega_0  \\
     & +\frac{2\omega_0\partial_x\eta_0}{\eta_0^2} \left(- \frac{2\omega_0\partial_x \omega_0}{\eta_0}  - \eta_0 \partial_x \eta_0 + \eta_0 \partial_{x}^2 \phi_0 + \frac{\omega_0^2 \partial_x \eta_0}{\eta_0^2}\right)-\frac{\omega_0^2}{\eta_0^2} \partial_x^2 \omega_0+\frac{2\omega_0^2\partial_x\eta}{\eta_0^3} \partial_x \omega_0.
\end{aligned}
\end{equation}
It follows from \eqref{initial_data_time} that 
$ \mathcal E(0) < \infty $ if \eqref{non-degenerate} and \eqref{V-initialdata} hold.

\subsection{\textit{A priori} estimates of the generalized system \eqref{ShallowWaterV}} 
We focus first on obtaining the {\it a priori} energy estimates for the solution $ V $ to system \eqref{ShallowWaterV}. We assume the non-degenerate condition \eqref{non-degenerate} holds
for all $0\leq t \leq T$, where $T$ is the maximum existence time. We will justify \eqref{non-degenerate} in Section \ref{Justification} below.

Our goal of this section is to establish the following proposition:
\begin{proposition}
    \label{prop:a-priori-V}
    Under the assumption \eqref{non-degenerate}, the solution to \eqref{ShallowWaterV} satisfies
    \begin{equation}
        \label{est:a-priori-energy-0}
        \sup_{0\leq t \leq T} \mathfrak E(t) \leq 2 \mathfrak  E(0),
    \end{equation}
    and thus, thanks to \eqref{energy-eqvlt}, 
    \begin{equation}
        \label{est:a-priori-energy}
        \sup_{0\leq t \leq T} \mathcal E(t) \leq C_{\underline h, \overline h} \mathcal  E(0),
    \end{equation}
    for some $ T \in (0,\infty) $ and some $ C_{\underline h, \overline h} \in (0,\infty) $ depending only on the initial data and $ \underline h, \overline h $. 
\end{proposition}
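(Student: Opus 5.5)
The plan is a standard energy estimate that exploits the symmetry of $A_0,A_1$ and the antisymmetry of $A_2$. Apply $\partial_t^i\partial_x^j$ with $i+j\le 2$ to \eqref{ShallowWaterV}, and write $V_{ij}:=\partial_t^i\partial_x^j V$. This gives
\begin{equation*}
A_0\partial_t V_{ij}+A_1\partial_x V_{ij}+A_2\partial_x^2 V_{ij}=\partial_t^i\partial_x^j S_\phi-[\partial_t^i\partial_x^j,A_0]\partial_t V-[\partial_t^i\partial_x^j,A_1]\partial_x V=:F_{ij}.
\end{equation*}
Take the $L^2$ inner product with $V_{ij}$ and sum over $i+j\le2$.

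The linear terms are handled by symmetry.
\begin{itemize}
\item Since $A_0$ is symmetric, $(A_0\partial_t V_{ij},V_{ij})=\frac12\frac{d}{dt}(A_0V_{ij},V_{ij})-\frac12((\partial_tA_0)V_{ij},V_{ij})$.
\item Since $A_1$ is symmetric, $(A_1\partial_xV_{ij},V_{ij})=-\frac12((\partial_xA_1)V_{ij},V_{ij})$.
\item The key point is the second-order term. Write $V_{ij}=(a,b,c)$. Then $(A_2\partial_x^2V_{ij},V_{ij})=\int(-\partial_x^2c\,b+\partial_x^2b\,c)\,dx=0$ after integrating by parts twice. This is exactly the skew structure behind Lemma \ref{LemmaPhi}, and it is why no derivative is lost.
\end{itemize}
The coefficient derivatives $\partial_tA_0=-\eta^{-2}\partial_t\eta$ and $\partial_xA_1$ involve only $\partial\eta,\partial\omega$ and $\eta^{-1}$. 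Under \eqref{non-degenerate}, and using 1D Sobolev $H^1\subset L^\infty$, these are bounded by $C_{\underline h,\overline h}(1+\mathcal E)^{k}$ for some power $k$. Note that $\mathcal E$ controls $H^1$ norms of $\partial_t\eta$ and $\partial_x\eta$.

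Next come the commutators $F_{ij}$. These are sums of products like $\partial^{\alpha}(\text{coeff})\,\partial^{\beta}\partial V$ with $|\alpha|\ge1$ and $|\alpha|+|\beta|\le 2$ (mixed $t,x$ derivatives). Because $A_0$ and $A_1$ depend only on $\eta,\omega$ and not on $\phi$, no derivative of $\phi$ appears in the coefficients. The highest-order cases are:
\begin{itemize}
\item $\alpha$ of order two, giving terms like $\partial_t^2(\eta^{-1})\partial_t\omega$ or $\partial_x^2(\omega/\eta^2)\partial_x V$. In these, $\partial^2\eta,\partial^2\omega\in L^2$ and the other factor is in $L^\infty$ by $H^1\subset L^\infty$, since $\partial_t\omega,\partial_x V\in H^1$ in the mixed sense controlled by $\mathcal E$.
\item $\alpha$ of order one, where one factor is $L^\infty$ and the other is $L^2$ at level $\le 2$.
\end{itemize}
The source $\partial_t^i\partial_x^jS_\phi$ contains at most second derivatives of $\omega,\eta,\phi$, so it is bounded the same way. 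Here $m$ in $S_\phi$ should be read as $\omega$.

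I expect the main obstacle to be the bookkeeping that every term is controlled by $\mathcal E$ alone. The difficulty is that $\mathcal E$ contains only mixed derivatives of total order $\le2$. For an $L^\infty$ bound on, say, $\partial_t\omega$, I would need $\partial_x\partial_t\omega\in L^2$, which is included since $i+j=2$. Also, the time derivative $\partial_t V$ appearing in $[\partial_t^i\partial_x^j,A_0]\partial_tV$ must never reach order three. This holds because the commutator removes at least one derivative from $V$.

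Putting this together gives
\begin{equation*}
\frac{d}{dt}\mathfrak E\le C_{\underline h,\overline h}(1+\mathfrak E)^{p}
\end{equation*}
for some $p\ge 1$, using \eqref{energy-eqvlt}. A continuity/ODE comparison argument then yields $\mathfrak E(t)\le 2\mathfrak E(0)$ on $[0,T]$. Here $T$ is small, depending on $\mathfrak E(0)$ and $\underline h,\overline h$, and $\mathfrak E(0)$ is finite by \eqref{initial_data_time}. Finally, \eqref{est:a-priori-energy} follows from \eqref{energy-eqvlt}.
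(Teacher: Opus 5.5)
Your proposal is correct and follows essentially the same route as the paper. The symmetry of $A_0,A_1$ and the skew-symmetry of the constant matrix $A_2$ remove the top-order terms (exactly Lemma \ref{LemmaPhi}), the commutator and source terms are bounded by a polynomial in $\mathcal E$ via $H^1\subset L^\infty$, and a short-time ODE argument closes \eqref{est:a-priori-energy-0}. The only cosmetic difference is at first order: the paper does not differentiate the equation there but bounds $\frac{d}{dt}\{(A_0\partial_xV,\partial_xV)+(A_0\partial_tV,\partial_tV)\}$ directly by second-order norms already in $\mathcal E$, as in \eqref{h-1-eq}, and your uniform treatment of all $i+j\le 2$ works equally well.
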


The proof of Proposition \ref{prop:a-priori-V} relies on the following property of the linear operator $ A_0 \partial_t + A_1 \partial_x + A_2 \partial_x^2$:
\begin{lemma}
\label{LemmaPhi}
    Let $\Phi: \mathbb R \times \mathbb R \to \mathbb R^3$ be the solution to the linear equation
    \begin{equation}
    \label{PhiEq}
    A_0\partial_t \Phi+A_1\partial_x \Phi+A_2\partial_x^2 \Phi=G,
    \end{equation}
    where $G\in L_t^{\infty}L^2_x$. Then for a.e. $ t $, we have
    \begin{equation}
    \label{LemmaPhiEstimate}
        \frac{d}{dt} (A_0 \Phi,\Phi) \leq C_{\underline h, \overline h} (\mathcal E(t) + 1) \Vert\Phi\Vert_{L^2}^2+ 2 \Vert G\Vert_{L^2} \Vert\Phi\Vert_{L^2}.
    \end{equation}
\end{lemma}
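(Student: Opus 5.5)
Differentiate the weighted quantity $(A_0\Phi,\Phi)$ in time, where $A_0=A_0(V)$ and $A_1=A_1(V)$ are evaluated along the solution $V$ of \eqref{ShallowWaterV}. Since $A_0$ is symmetric,
\begin{equation*}
\frac{d}{dt}(A_0\Phi,\Phi)=((\partial_t A_0)\Phi,\Phi)+2(A_0\partial_t\Phi,\Phi).
\end{equation*}
Substituting \eqref{PhiEq} gives
\begin{equation*}
2(A_0\partial_t\Phi,\Phi)=2(G,\Phi)-2(A_1\partial_x\Phi,\Phi)-2(A_2\partial_x^2\Phi,\Phi).
\end{equation*}
The term $2(G,\Phi)$ is bounded by $2\Vert G\Vert_{L^2}\Vert\Phi\Vert_{L^2}$ using Cauchy--Schwarz, which accounts for the last term in \eqref{LemmaPhiEstimate}. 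The remaining terms are handled through the structure of the matrices.

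\emph{The dispersive term.} Write $\Phi=(\Phi_1,\Phi_2,\Phi_3)^\top$. Then
\begin{equation*}
(A_2\partial_x^2\Phi,\Phi)=-(\partial_x^2\Phi_3,\Phi_2)+(\partial_x^2\Phi_2,\Phi_3).
\end{equation*}
Integrating by parts twice gives $(\partial_x^2\Phi_2,\Phi_3)=(\Phi_2,\partial_x^2\Phi_3)$, so this term vanishes identically. This antisymmetry is the key structural point: the second-order operator $A_2\partial_x^2$ is skew-adjoint on $L^2$, which is why no derivatives of $\Phi$ are lost.

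\emph{The first-order term.} Since $A_1$ is symmetric, $2(A_1\partial_x\Phi,\Phi)=-((\partial_x A_1)\Phi,\Phi)$. The only nonconstant entry of $A_1$ is $2\omega/\eta^2$, so
\begin{equation*}
|\partial_x A_1|\lesssim \frac{|\partial_x\omega|}{\eta^2}+\frac{|\omega\,\partial_x\eta|}{\eta^3}.
\end{equation*}
The only nonconstant entry of $A_0$ is $1/\eta$, so $|\partial_t A_0|=|\partial_t\eta|/\eta^2$.

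Both terms then satisfy
\begin{equation*}
|((\partial_tA_0-\partial_xA_1)\Phi,\Phi)|\le \Vert\partial_tA_0-\partial_xA_1\Vert_{L^\infty}\Vert\Phi\Vert_{L^2}^2.
\end{equation*}
It remains to bound these $L^\infty$ norms by $C_{\underline h,\overline h}(\mathcal E+1)$. By \eqref{non-degenerate}, the factors $1/\eta^k$ are bounded by constants depending on $\underline h,\overline h$. The one-dimensional Sobolev embedding $H^1(\mathbb R)\hookrightarrow L^\infty$ gives
\begin{equation*}
\Vert\partial_t\eta\Vert_{L^\infty}+\Vert\partial_x\omega\Vert_{L^\infty}\lesssim \mathcal E^{1/2},
\end{equation*}
because $\partial_t\eta,\partial_t\partial_x\eta,\partial_x\omega,\partial_x^2\omega$ are all components of $\mathcal E$ in \eqref{Energy} (they have $i+j\le2$). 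Likewise $\Vert\omega\partial_x\eta\Vert_{L^\infty}\lesssim\mathcal E$. Using $\mathcal E^{1/2}\le 1+\mathcal E$ then yields the coefficient $C_{\underline h,\overline h}(\mathcal E(t)+1)$, which completes \eqref{LemmaPhiEstimate}.

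\emph{Expected obstacle.} The main delicate point is justifying the integrations by parts, in particular the cancellation of the $A_2$ term. This requires $\Phi$ to have enough regularity, say $\Phi_2,\Phi_3\in H^1$ in space, so that the boundary terms vanish and the pairings make sense. I would first prove the identity for smooth, decaying $\Phi$ (or for mollified versions of $\Phi$), and then pass to the limit. The fact that $A_2$ has constant coefficients means no commutator arises, so this limiting argument is clean and yields the inequality for a.e. $t$.
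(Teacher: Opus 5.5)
Your argument is correct and is essentially the paper's proof: the same energy identity, the $A_2$ term vanishing by skew-symmetry, integration by parts in the $A_1$ term using symmetry, and an $L^\infty$ bound on $\partial_tA_0,\partial_xA_1$ via Sobolev embedding. The paper substitutes $\partial_t\eta=-\partial_x\omega$ from the equation, whereas you read $\partial_t\eta$ off $\mathcal E$ directly, which is equally valid. One cosmetic slip: the coefficient that appears is $\partial_tA_0+\partial_xA_1$, not $\partial_tA_0-\partial_xA_1$, but this is harmless since only its $L^\infty$ norm enters.
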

\begin{proof}
    Taking the $ L^2 $-inner product of \eqref{PhiEq} with $ 2 \Phi $ and integrating the resultant by part lead to 
    \begin{equation}
        \label{phi:001}
        \dfrac{d}{dt} (A_0 \Phi, \Phi) =  (\partial_t A_0 \Phi, \Phi) + (\partial_x A_1 \Phi, \Phi) + \underbrace{2(A_2 \partial_x \Phi, \partial_x \Phi)}_{=0} + 2 (G,\Phi),
    \end{equation}
    where we used the fact that $ A_0, A_1 $ are symmetric matrices and $ A_2 $ is a constant skew-symmetric matrix. Applying the Hölder inequality to the right-hand side of \eqref{phi:001} yields
    \begin{equation}
    \label{phi:002}
        \dfrac{d}{dt}(A_0 \Phi, \Phi) \leq \norm{\partial_t A_0, \partial_x A_1}{L^\infty}\norm{\Phi}{L^2}^2 + 2 \norm{G}{L^2} \norm{\Phi}{L^2}. 
    \end{equation}
    It remains to estimate $ \norm{\partial_t A_0, \partial_x A_1}{L^\infty} $. From \eqref{def:As} and \eqref{ShallowWaterV}, one has
    \begin{equation}
        \label{phi:003}
        \begin{gathered}
        \norm{\partial_t A_0, \partial_x A_1}{L^\infty} \leq C_{\underline h} \norm{\partial_t \eta, \partial_x \omega, \omega \partial_x \eta}{L^\infty} \leq C_{\underline h} (\norm{\partial_x \omega, \omega,\partial_x \eta}{L^\infty}^2 + 1) \\
        \leq C_{\underline h} (\sum_{j=0}^2 \norm{\partial_x^j \omega, \partial_x^j (\eta-1)}{L^2}^2 + 1) \leq C_{\underline h, \overline h} (\mathcal E(t) + 1),
        \end{gathered}
    \end{equation}
    where we have used the Sobolev embedding inequality $ \norm{\cdot}{L^\infty} \lesssim \norm{\cdot }{H^1} $. 
    This finishes the proof. 
\end{proof}
Now we are ready to establish the {\it a priori} estimates in the generalized system \eqref{ShallowWaterV}. Applying \eqref{LemmaPhiEstimate} to \eqref{ShallowWaterV} immediately leads to 
\begin{equation}
    \label{basic:energy}
    \dfrac{d}{dt}(A_0V,V) \leq C_{\underline h,\overline h} (\mathcal E(t) + 1) \norm{V}{L^2}^2 + 2 \norm{S_\phi}{L^2} \norm{V}{L^2}.
\end{equation}
Meanwhile, one has the following systems for $ V_{tt} := \partial_{t}^2 V, V_{xt} := \partial_x\partial_{t} V $, and $ V_{xx} := \partial_{x}^2 V $:
\begin{equation}
    \label{sys:dd-V}
    A_0 \partial_t V_{a} + A_1 \partial_x V_{a} + A_2 \partial_{x}^2  V_a = G_a, \qquad \text{for} ~ a \in \lbrace tt, xt, xx\rbrace,
\end{equation}
where
\begin{align}
    \label{id:tt-G}
    & \begin{aligned}
     & G_{tt} := \partial_{t}^2 S_\phi - 2 \partial_t A_0 \partial_{t}^2 V - \partial_{t}^2 A_0 \partial_t V - 2 \partial_t A_1 \partial_t\partial_{x}V- \partial_{t}^2 A_1 \partial_x V,
    \end{aligned}
    \\
    & \begin{aligned}
        & G_{xt} := \partial_{xt} S_\phi - \partial_x A_0 \partial_{tt} V - \partial_t A_0 \partial_{xt} V - \partial_{xt} A_0 \partial_t V \\
        & \qquad - \partial_x A_1 \partial_{xt}V - \partial_{t} A_1 \partial_{xx} V - \partial_{xt} A_1 \partial_x V, 
    \end{aligned}
    \label{id:xt_G}
    \\
    \label{id:xx_G}
    & \begin{aligned}
        & G_{xx} := \partial_{xx} S_\phi - 2 \partial_x A_0 \partial_{xt} V - \partial_{xx}A_0 \partial_t V - 2 \partial_x A_1 \partial_{xx} V - \partial_{xx} A_1 \partial_x V. 
    \end{aligned}
\end{align}
Then applying \eqref{LemmaPhiEstimate} in Lemma \ref{LemmaPhi} to \eqref{sys:dd-V}, one has that
\begin{equation}
    \label{h2:001}
    \dfrac{d}{dt} \sum_{a \in \lbrace tt, xt, xx\rbrace} (A_0 V_a, V_a) \leq C_{\underline h, \overline{\eta}} (\mathcal E(t) + 1) \norm{V_{tt}, V_{xt}, V_{xx}}{L^2}^2 + 2\norm{G_{tt},G_{xt},G_{xx}}{L^2} \norm{V_{tt}, V_{xt}, V_{xx}}{L^2}. 
\end{equation}
Moreover, one can directly calculate that 
\begin{equation}
\label{h-1-eq}
\begin{gathered}
    \dfrac{d}{dt}\lbrace (A_0 \partial_x V, \partial_x V) + (A_0 \partial_t V, \partial_t V) \rbrace = 2 (A_0 \partial_{x}\partial_t V, \partial_x V) + 2 (A_0 \partial_{t}^2 V, \partial_t V) \\ + (\partial_t A_0 \partial_x V, \partial_x V) + (\partial_t A_0 \partial_t V,\partial_t V)\\
    \lesssim \norm{A_0,\partial_t A_0}{L^\infty} \norm{\partial_x V, \partial_{x}\partial_tV,\partial_{t}V,\partial_{t}^2 V}{L^2}^2 \\
    \overset{\eqref{phi:003}}{\leq} C_{\underline h, \overline h} (\mathcal E(t) + 1)\norm{\partial_x V, \partial_{x}\partial_tV,\partial_{t}V,\partial_{t}^2 V}{L^2}^2.
    \end{gathered}
\end{equation}
Combining \eqref{basic:energy}, \eqref{h2:001}, and \eqref{h-1-eq} leads to
\begin{equation}
    \label{h-002}
    \dfrac{d}{dt}\mathfrak E(t)\leq C_{\underline h, \overline h} (\mathcal E(t) + 1)^2 + 2 \norm{S_\phi,G_{tt},G_{xt},G_{xx}}{L^2} (\mathcal E(t))^{\frac{1}{2}}.
\end{equation}
It remains to estimate $ \norm{S_\phi,G_{tt},G_{xt},G_{xx}}{L^2} $. However, this is immediate. Since $ A_0, A_1, S_\phi $ depend only on $ V $ but not the derivative(s) of $ V $, one can write that, for $ \Psi \in \lbrace A_0, A_1, S_\phi \rbrace $, 
\begin{align}
    \label{est:nonlinearity-001}
    & \norm{\partial_x \Psi}{L^\infty} \leq (\norm{\Psi}{L^{\infty}}+\norm{ \partial_x \Psi }{L^\infty}) \norm{\partial_x V}{L^\infty} \leq C_{\underline h,\overline h} \mathcal P(\norm{V}{L^\infty}) \norm{\partial_x V}{L^\infty}, \\
    & \norm{\partial_t \Psi}{L^\infty} \leq (\norm{\Psi}{L^{\infty}}+\norm{ \partial_x \Psi }{L^\infty}) \norm{\partial_t V}{L^\infty} \leq C_{\underline h,\overline h} \mathcal P(\norm{V}{L^\infty}) \norm{\partial_t V}{L^\infty}, \\
    & \label{est:nonlinearity-003} \begin{aligned}& \norm{\partial_{x}\partial_t \Psi}{L^2} \leq \norm{\partial_x \Psi}{L^\infty} \norm{\partial_{x}\partial_t V}{L^2} + \norm{\partial_{x}\partial_t\Psi}{L^\infty} \norm{\partial_x V}{L^\infty}\norm{\partial_t V}{L^2} \\
    & \qquad \leq C_{\underline h, \overline h} \mathcal P(\norm{V}{L^\infty}) (\norm{\partial_{x}^2 V}{L^2} + \norm{\partial_x V}{L^\infty}\norm{\partial_t V}{L^2} ),
    \end{aligned}\\
    & \label{est:nonlinearity-004} \begin{aligned}& \norm{\partial_{t}^2 \Psi}{L^2} \leq \norm{\partial_x \Psi}{L^\infty} \norm{\partial_{t}^2 V}{L^2} + \norm{\partial_{x}^2\Psi}{L^\infty} \norm{\partial_t V}{L^\infty}\norm{\partial_t V}{L^2} \\
    & \qquad \leq C_{\underline h, \overline h} \mathcal P(\norm{V}{L^\infty}) (\norm{\partial_{t}^2 V}{L^2} + \norm{\partial_t V}{L^\infty}\norm{\partial_t V}{L^2} ),
    \end{aligned}
\end{align}
where, hereafter, $\mathcal{P}(\cdot) $ is a generic non-decreasing non-negative polynomial of the argument, different from line to line. Therefore, after applying the Sobolev embedding inequality, we arrive at
\begin{equation}
    \label{est:nonlinear-005}
    \sum_{\Psi\in\lbrace A_0, A_1, S_\phi \rbrace}\lbrace \norm{\partial_x \Psi, \partial_t \Psi}{L^\infty} + \norm{ \partial_{x}\partial_t \Psi,\partial_{t}^2 \Psi}{L^2} \rbrace \leq  \mathcal P(\mathcal E(t)^{\frac{1}{2}}).
\end{equation}
Now we conclude with
\begin{equation}
    \label{est:nonlinear-006}
    \begin{gathered}
    \norm{S_\phi,G_{tt},G_{xt},G_{xx}}{L^2}  \leq \sum_{\Psi \in \lbrace A_0, A_1, S_\phi\rbrace} \bigl\lbrace \norm{\partial_x \Psi, \partial_t \Psi}{L^\infty} \norm{\partial_{x}\partial_tV,\partial_{t}^2V,\partial_{x}^2V}{L^2} \\
    +  \norm{\partial_{x}\partial_t \Psi, \partial_{t}^2 \Psi}{L^2} \norm{\partial_{x}V,\partial_{t}V}{L^\infty} \rbrace + \norm{S_\phi, \partial_{x}^2S_\phi, \partial_{x}\partial_t S_\phi, \partial_{t}^2 S_\phi}{L^2} 
    \leq  \mathcal P(\mathcal E(t)^{\frac{1}{2}}).
    \end{gathered}
\end{equation}
Hence, from \eqref{h-002} and \eqref{est:nonlinear-006}, thanks to \eqref{energy-eqvlt}, there exists $T>0$ such that
\begin{equation}
    \label{energy-ineq-total}
    \dfrac{d}{dt}\mathfrak E(t) \leq  \mathcal P(\mathfrak E(t)^{\frac{1}{2}}),
\end{equation}
for all $t\in (0,T)$. This finishes the proof of Proposition \ref{prop:a-priori-V} after integrating \eqref{energy-ineq-total}.

\subsection{Justifying the \textit{a priori} assumption \eqref{non-degenerate}}
\label{Justification}
We now turn to the assumption \eqref{non-degenerate}. Our goal is to show the following proposition:
\begin{proposition}
    \label{prop:a-priori-assumption}
    Suppose that 
    \begin{equation}
    \label{est:energy}
        \sup_{0\leq t \leq T} \mathcal E(t) < C_{\underline h, \overline h} \mathcal E(0),
    \end{equation}
    for some $ T \in(0,\infty) $ and $ C_{\underline h, \overline h} \in (0,\infty) $. There exists $ T'\in (0,T] $ such that 
    \begin{equation}
    \label{est:height}
       0<\frac{3}{4} \underline h \leq \inf_{x,t\in [0,T']} \eta(x,t) \leq \sup_{x,t\in [0,T']} \eta(x,t) \leq \frac{3}{2} \overline h.
    \end{equation}
    In particular, \eqref{est:height} is a stronger estimate than \eqref{non-degenerate}.
    Therefore, after updating $ T $ with $ T' $, both \eqref{est:energy} and \eqref{est:height} hold in the interval $ [0,T] $. 
\end{proposition}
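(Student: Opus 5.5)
The plan is to control the deviation of $\eta$ from its initial profile through the fundamental theorem of calculus in time, and to bound $\partial_t\eta$ uniformly in $x$ via the energy bound \eqref{est:energy} and the Sobolev embedding $\norm{\cdot}{L^\infty}\lesssim\norm{\cdot}{H^1}$ in one dimension. For each $x\in\mathbb R$ and $t\in[0,T]$ we write
\begin{equation*}
\eta(x,t)-\eta_0(x)=\int_0^t \partial_t\eta(x,s)\,ds ,
\end{equation*}
which gives
\begin{equation*}
\sup_x|\eta(x,t)-\eta_0(x)|\leq t\sup_{0\leq s\leq t}\norm{\partial_t\eta(s)}{L^\infty}.
\end{equation*}

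Next we bound the time derivative. By Sobolev embedding,
\begin{equation*}
\norm{\partial_t\eta(s)}{L^\infty}\lesssim \norm{\partial_t\eta(s)}{L^2}+\norm{\partial_x\partial_t\eta(s)}{L^2}\lesssim \mathcal E(s)^{1/2}.
\end{equation*}
Both terms on the right appear in \eqref{Energy}, since they correspond to $(i,j)=(1,0)$ and $(1,1)$, which satisfy $i+j\leq 2$. Hence \eqref{est:energy} yields
\begin{equation*}
\sup_{0\leq s\leq T}\norm{\partial_t\eta(s)}{L^\infty}\leq C_0\,(C_{\underline h,\overline h}\mathcal E(0))^{1/2}=:K,
\end{equation*}
a constant depending only on the initial data and on $\underline h,\overline h$. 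One could instead use the equation $\partial_t\eta=-\partial_x\omega$, but the time-derivative part of the energy gives the bound directly without assuming the first row of \eqref{ShallowWaterV} beyond what is already encoded in $\mathcal E$.

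We then choose $T':=\min\{T,\ \underline h/(4K)\}$, taking $T'=T$ if $K=0$. For $t\le T'$ this gives $\sup_x|\eta(x,t)-\eta_0(x)|\le \underline h/4$. Since the initial profile lies between $\underline h$ and $\overline h$ (with $\eta_0$ taken as in \eqref{h_initial-bound}), we obtain $\eta\geq \underline h-\underline h/4=\tfrac34\underline h$ and $\eta\leq \overline h+\underline h/4\leq \overline h+\overline h/2=\tfrac32\overline h$, where the last step uses $\underline h<1<\overline h$. This is \eqref{est:height}. Because $\tfrac34\underline h>\tfrac12\underline h$ and $\tfrac32\overline h<2\overline h$, the bound \eqref{est:height} strictly improves \eqref{non-degenerate}.

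The only delicate point is logical rather than computational: the bound \eqref{est:energy} was itself derived under \eqref{non-degenerate}. This should be closed by a standard continuity (bootstrap) argument. The quantity $\inf\eta$ and $\sup\eta$ are continuous in $t$ because $\partial_t\eta\in L^\infty$, and at $t=0$ the stronger bounds hold. So the set of times on which \eqref{non-degenerate} holds is closed, and on it the improved bound \eqref{est:height} holds with margin, which makes the set open as well. Also, $T'$ depends only on $K$, so it cannot shrink during the argument. Updating $T$ to $T'$ then gives both \eqref{est:energy} and \eqref{est:height} on $[0,T]$.
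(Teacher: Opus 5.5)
Your proposal is correct and follows the paper's proof. Both arguments use the fundamental theorem of calculus in time together with $\|\partial_t\eta\|_{L^\infty}\lesssim\|\partial_t\eta,\partial_x\partial_t\eta\|_{L^2}\lesssim\mathcal E(0)^{1/2}$, obtained from Sobolev embedding and \eqref{est:energy}, and then take $t$ small. Your explicit choice $T'=\min\{T,\underline h/(4K)\}$, the check $\overline h+\underline h/4\le\frac32\overline h$, and the continuity remark for the bootstrap simply spell out steps the paper leaves to the reader.
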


\begin{proof}
    Given $ t \in [0,T] $,
    \begin{equation}
    \label{est:h-001}
        \norm{\partial_t \eta(t)}{L^\infty} \leq \norm{\partial_t \eta(t),\partial_{x}\partial_t \eta(t)}{L^2} \lesssim \mathcal E(t)^{\frac{1}{2}} \lesssim \mathcal E(0)^{\frac{1}{2}},
    \end{equation}
    by the Sobolev embedding inequality. One has that, for all $ t \in [0,T] $,
    \begin{equation}
    \label{est:h-002}
    \begin{gathered}
    \eta(x,t)  = \eta_0(x) + \int_0^t \partial_t \eta(x,\tau) d\tau \leq \eta_0(x) + \int_0^t \norm{\partial_t \eta(\tau)}{L^\infty} \,d\tau\\
    \leq \overline h + t C \mathcal E(0)^{\frac{1}{2}},
    \end{gathered}
    \end{equation}
    for some $ C \in (0,\infty) $. 
    Therefore, by choosing $ t $ small enough, \eqref{est:h-002} implies the last inequality in \eqref{est:height}. The other inequality in \eqref{est:height} follows similarly. 
\end{proof}

We henceforth conclude from Propositions \ref{prop:a-priori-V} and \ref{prop:a-priori-assumption} via boostrap arguments with the following:
\begin{proposition}
    \label{EnergyProposition}
    Consider $ (\eta_0 -1, \omega_0, \phi_0 ) \in H^3(\mathbb{R}) \times H^4(\mathbb{R}) \times H^4(\mathbb{R}) $, i.e., initial data as in \eqref{V-initialdata}. 
    There exists some time $T>0$ such that if $(\eta-1,\omega,\phi)$ solves \eqref{ShallowWaterV}, then
    \begin{equation}
    \label{est:energy-total}
        \sup_{0\leq t\leq T} \sum_{\substack{i+j\leq 2,\\ i,j \geq 0}} \Vert\partial_t^i\partial_x ^j (\eta-1),\partial_t^i\partial_x^j \omega,\partial_t^i\partial_x^{j}\phi\Vert_{L^2}^2 \leq C_{\underline h, \overline h} \mathcal E(0),
    \end{equation}
    for some constant $ C_{\underline h, \overline h} \in (0,\infty) $ depending only on $ \underline h, \overline h $. 
\end{proposition}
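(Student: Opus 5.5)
The plan is a standard continuity (bootstrap) argument that combines Proposition \ref{prop:a-priori-V} and Proposition \ref{prop:a-priori-assumption}. We first fix the initial data as in \eqref{V-initialdata}. By \eqref{initial_data_time} we have $\mathcal E(0)<\infty$. Moreover, $\eta_0$ satisfies the bounds of \eqref{h_initial-bound}, so the strict form of \eqref{non-degenerate} (indeed \eqref{est:height}) holds at $t=0$. Set
\[
T^\ast := \sup\Bigl\{ t\ge 0 : \eqref{non-degenerate} \text{ holds on } [0,t] \text{ and } \sup_{[0,t]}\mathfrak E \le 2\mathfrak E(0)\Bigr\}.
\]
We assume the solution is regular enough that $\eta$, $\mathfrak E$ and $\mathcal E$ are continuous in time. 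Since $\partial_t\eta\in L^\infty$ by \eqref{est:h-001} and Sobolev embedding, the set defining $T^\ast$ is a nonempty closed interval, and $T^\ast>0$ by continuity.

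On $[0,T^\ast]$ the assumption \eqref{non-degenerate} holds, so the inequality \eqref{energy-ineq-total} is valid. Hence $\mathfrak E(t)\le Y(t)$, where $Y$ solves the ODE $Y'=\mathcal P(Y^{1/2})$ with $Y(0)=\mathfrak E(0)$. This comparison gives an explicit time $T_1>0$, depending only on $\mathfrak E(0)$ and on $\underline h,\overline h$ through $\mathcal P$, such that $\mathfrak E(t)\le \tfrac32\mathfrak E(0)$ for $t\le \min(T_1,T^\ast)$. This is strictly better than the bootstrap bound $2\mathfrak E(0)$. By \eqref{energy-eqvlt} we then get $\mathcal E(t)\le C_{\underline h,\overline h}\mathcal E(0)$ on the same interval. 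Next, Proposition \ref{prop:a-priori-assumption} (with \eqref{est:h-002}) gives $\frac34\underline h\le \eta\le \frac32\overline h$ for $t\le T_2:=c\,\overline h\,\mathcal E(0)^{-1/2}$. Here $c$ depends only on $C_{\underline h,\overline h}$ and the embedding constant; the lower bound comes from $\eta\ge \underline h - tC\mathcal E(0)^{1/2}$. Both of these improvements are strict.

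Set $T:=\min(T_1,T_2)$ and suppose $T^\ast<T$. Then at $t=T^\ast$ both bootstrap conditions hold with strict inequality, and continuity extends them to a slightly larger interval, which contradicts the maximality of $T^\ast$. Therefore $T^\ast\ge T$, and \eqref{est:energy-total} follows on $[0,T]$.

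The main obstacle is the uniformity of $T$: we must ensure that $T$ depends only on $\mathcal E(0)$ and $\underline h,\overline h$, and not on the bootstrap interval itself. This is why the constants in \eqref{energy-eqvlt} and in $\mathcal P$ are evaluated under \eqref{non-degenerate}, which is a fixed range, rather than under the sharper \eqref{est:height}. A second point to handle is the time continuity of $\mathfrak E$ needed to run the continuity argument. I would justify it for the smooth solutions used in the construction (e.g.\ the vanishing viscosity approximations) and state the proposition as an a priori bound for such solutions.
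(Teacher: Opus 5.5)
Your proposal is correct and follows the paper's route: the paper simply asserts that Proposition \ref{EnergyProposition} follows from Propositions \ref{prop:a-priori-V} and \ref{prop:a-priori-assumption} by a bootstrap argument, and you write out that continuity argument explicitly, using ODE comparison for $\mathfrak E$, the explicit time coming from \eqref{est:h-002}, and a strict improvement of both bootstrap conditions. One trivial slip: the lower bound on $\eta$ is the binding constraint, so $T_2$ should scale like $\underline h\,\mathcal E(0)^{-1/2}$ (for instance $T_2=\underline h/(4C\,\mathcal E(0)^{1/2})$), not like $\overline h\,\mathcal E(0)^{-1/2}$.
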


\section{Local well-posedness of the 1d generalized system \eqref{ShallowWaterV} via vanishing viscosity}
\label{vv}

\subsection{Vanishing viscosity and the approximating system}

In this section, we will focus on the construction of the local-in-time solution and establish the well-posedness of system \eqref{ShallowWaterV}. Notice that the system \eqref{ShallowWaterV} becomes singular in $ \lbrace h = 0 \rbrace $. To construct a solution to the nonlinear problem, one will need to avoid this possible degeneracy. 
For $ \varepsilon \in (0,1) $, let 
\begin{equation}
\label{def:jpn-bracket}
\langle (\cdot)\rangle_{\varepsilon}:=\sqrt{(\cdot)^2+\varepsilon^2}.
\end{equation}
and 
\begin{equation}\label{vsc-As}\begin{gathered}
A_0^{\varepsilon}:=\begin{pmatrix}
1 & 0 & 0 \\
0 & \langle \eta^\varepsilon\rangle_{\varepsilon}^{-1} & 0 \\
0 & 0 & 0\\
\end{pmatrix}, \quad 
A_1^{\varepsilon}:=
\begin{pmatrix}
    0 & 1 & 0 \\
    1 & 2\omega^\varepsilon\langle \eta^\varepsilon\rangle_{\varepsilon}^{-2} & 0 \\
    0 & 0 & 0 \\
\end{pmatrix},
\\
A_2^{\varepsilon}:=\begin{pmatrix}
        -\varepsilon & 0 & 0 \\
        0 & -\varepsilon & -1 \\
        0 & 1 & -\varepsilon \\
    \end{pmatrix},
    \quad
    S_{\phi}^{\varepsilon}:=
    \begin{pmatrix}
        0 & \\
        (\omega^\varepsilon)^2 \langle \eta^\varepsilon\rangle_{\varepsilon}^{-3}\phi^\varepsilon\\
        0
    \end{pmatrix}.
\end{gathered}
\end{equation}
Then we consider the following regularization of system \eqref{ShallowWaterV}:
\begin{subequations}
\label{ShallowWaterMomentumgeneralized}
\begin{gather}
\label{ShallowWaterMomentumgeneralizedConsice}
A_0^\varepsilon \partial_t V^\varepsilon + A_1^\varepsilon \partial_x V^\varepsilon + A_2^\varepsilon \partial_{x}^2 V^\varepsilon = S_\phi^\varepsilon, \\
    V^\varepsilon(0,x)=V_0^\varepsilon = (\eta_0^{\varepsilon}-1, \omega_0^{\varepsilon}, \phi_0^{\varepsilon}), 
\end{gather}
\end{subequations}
where the initial data satisfies 
\begin{equation}
    \label{Vsc_initial}
    \begin{aligned}
    \eta_0^\varepsilon - 1 & \xrightarrow{\varepsilon\to 0} \eta_0-1  \qquad  & \text{in} ~ H^3(\mathbb R), \\
    (\omega_0^{\varepsilon}, ~ \phi_0^{\varepsilon}) & \xrightarrow{\varepsilon\to 0} (\omega_0, ~ \phi_0) & \text{in} ~ H^4(\mathbb R),
    \end{aligned}
\end{equation}
and 
\begin{align}
    \label{Vsc_initial-2}
    \varepsilon (\eta_0^\varepsilon-1) & \xrightarrow{\varepsilon\to 0} 0  \qquad   \text{in} ~ H^4(\mathbb R). 
\end{align}
The energy associated to \eqref{ShallowWaterMomentumgeneralized} is given by
\begin{equation}
    \label{EpsilonEnergy}
    \mathfrak E^{\varepsilon}(t):=\sum_{\substack{i+j\leq 2,\\ i,j \geq 0}} \Vert\partial_t^i\partial_x^j (\eta^{\varepsilon}-1),\frac{\partial_t^i\partial_x^j \omega^{\varepsilon}}{{\langle \eta^{\varepsilon}\rangle}^{\frac{1}{2}}_{\varepsilon}},\partial_t^i\partial_x^{j}\phi^{\varepsilon}\Vert_{L^2}^2.
\end{equation}
For any fixed $ \varepsilon $, system \eqref{ShallowWaterMomentumgeneralized} is a non-degenerate uniformly parabolic system, for which there exists a local-in-time solution through construction in, for instance, \cite{Taylor}. Note that the existence time of the said solution may depend on $ \varepsilon $. However, one can immediately check that the {\it a priori} estimates in Section \ref{1DEnergyEstimates} still hold for $ V^\varepsilon $, uniformly in $ \varepsilon $. Therefore, one can conclude the following proposition:
\begin{proposition}
    \label{WellPosedViscousProblem}
    Given initial data $ V^\varepsilon_0 = (\eta_0^{\varepsilon}-1,\omega_0^{\varepsilon}, \phi_0^{\varepsilon})\in H^4(\mathbb{R})\times H^4(\mathbb{R})\times H^4(\mathbb{R})$, 
    there exists $ \mathfrak{T}\in (0,\infty) $, independent of $ \varepsilon $, such that there exists a unique solution $ V^\varepsilon = (\eta^\varepsilon-1,\omega^\varepsilon,\phi^\varepsilon) \in  L^\infty(0,\mathfrak{T};H^2(\mathbb{R})) $ and $ \partial_t V^\varepsilon \in L^\infty(0,\mathfrak{T};H^1(\mathbb R)) $, satisfying
    \begin{equation}
        \label{energy-viscous}
        \sup_{0\leq t \leq \mathfrak{T}} \sum_{\substack{i + j \leq 2, \\
        i \geq 0, j \geq 0}} \norm{\partial_t^i\partial_x^j V^\varepsilon(t)}{L^2} \leq C (\varepsilon\norm{ \eta^\varepsilon_0-1}{H^4}+\norm{\eta_0^\varepsilon-1}{H^3} + \norm{\omega^\varepsilon_0,\phi^\varepsilon_0}{H^{4}} ),
    \end{equation}
    for some constant $ C \in (0,\infty) $, depending only on $ \underline h, \overline h $ but independent of $ \varepsilon \in (0,1) $. 
\end{proposition}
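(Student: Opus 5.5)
The argument has three parts: fixed-$\varepsilon$ local existence, an $\varepsilon$-uniform energy estimate, and a continuation argument. Note first that, as written in \eqref{vsc-As}, $A_0^\varepsilon$ has a zero in the $(3,3)$ entry. I will read this as the intended regularization $\operatorname{diag}(1,\langle\eta^\varepsilon\rangle_\varepsilon^{-1},1)$, so that $A_0^\varepsilon$ is positive definite with bounds depending only on $\varepsilon$ and $\|\eta^\varepsilon\|_{L^\infty}$.

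\textbf{Step 1: existence for fixed $\varepsilon$.} The symmetric part of $A_2^\varepsilon$ is $-\varepsilon I$. Hence \eqref{ShallowWaterMomentumgeneralizedConsice} is a quasilinear parabolic system of the form $\partial_t V^\varepsilon = \varepsilon (A_0^\varepsilon)^{-1}\partial_x^2 V^\varepsilon + (\text{skew and lower order terms})$. Its coefficients are smooth in $V^\varepsilon$ with no degeneracy, since $\langle\cdot\rangle_\varepsilon\ge\varepsilon$. The standard theory (e.g.\ \cite{Taylor}, or a contraction argument in $C([0,T_\varepsilon];H^4)$ based on the analytic heat-type semigroup) gives a unique solution on some $[0,T_\varepsilon]$ with $V^\varepsilon\in C([0,T_\varepsilon];H^4)$. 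Using the equation, the time derivatives $\partial_tV^\varepsilon$ and $\partial_t^2V^\varepsilon$ then lie in $H^2$ and $L^2$ respectively. Uniqueness in the class stated follows by an $L^2$ estimate on the difference of two solutions.

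\textbf{Step 2: uniform a priori estimate.} I redo Lemma \ref{LemmaPhi} for the operator $A_0^\varepsilon\partial_t+A_1^\varepsilon\partial_x+A_2^\varepsilon\partial_x^2$. Pairing with $2\Phi$, the second-order term now gives $2(A_2^\varepsilon\partial_x\Phi,\partial_x\Phi) = -2\varepsilon\|\partial_x\Phi\|_{L^2}^2\le 0$, so the viscosity only helps. The remaining terms are exactly those already estimated: $\partial_tA_0^\varepsilon$ and $\partial_xA_1^\varepsilon$ are bounded as in \eqref{phi:003}, because the derivatives of $\langle\eta\rangle_\varepsilon^{-k}$ are bounded by $C_{\underline h,\overline h}$ uniformly in $\varepsilon$ on the set where \eqref{non-degenerate} holds. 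Differentiating the system in $t$ and $x$ produces the same commutators $G_a$ as in \eqref{id:tt-G}--\eqref{id:xx_G}; $A_2^\varepsilon$ is constant, so it contributes nothing new. The nonlinear bounds \eqref{est:nonlinearity-001}--\eqref{est:nonlinear-006} therefore carry over verbatim with $\varepsilon$-independent constants, and we get $\frac{d}{dt}\mathfrak E^\varepsilon\le\mathcal P((\mathfrak E^\varepsilon)^{1/2})$. Proposition \ref{prop:a-priori-assumption} justifies \eqref{non-degenerate}, and the argument is closed by the same bootstrap as Proposition \ref{EnergyProposition}.

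\textbf{Step 3: the initial energy and $\varepsilon$-independence of the time.} This step is where the $\varepsilon\|\eta_0^\varepsilon-1\|_{H^4}$ term in \eqref{energy-viscous} arises, and I expect it to be the main obstacle. Computing $\partial_t V^\varepsilon|_{t=0}$ and $\partial_t^2V^\varepsilon|_{t=0}$ from the equation produces the extra viscous terms $\varepsilon\partial_x^2\eta_0^\varepsilon$, $\varepsilon\partial_x^2\omega_0^\varepsilon$, $\varepsilon\partial_x^2\phi_0^\varepsilon$ and, at second order, $\varepsilon^2\partial_x^4$ of the data. Most of these are controlled by $\|\omega_0^\varepsilon,\phi_0^\varepsilon\|_{H^4}$ and $\|\eta_0^\varepsilon-1\|_{H^3}$, with harmless powers of $\varepsilon<1$. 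The one exception is the term $\varepsilon^2\partial_x^4\eta_0^\varepsilon$ in $\partial_t^2\eta^\varepsilon$, which needs exactly $\varepsilon\|\eta_0^\varepsilon-1\|_{H^4}$. This yields
\[
\mathcal E^\varepsilon(0)\le C\bigl(\varepsilon\|\eta_0^\varepsilon-1\|_{H^4}+\|\eta_0^\varepsilon-1\|_{H^3}+\|\omega_0^\varepsilon,\phi_0^\varepsilon\|_{H^4}\bigr)^2,
\]
with $C$ independent of $\varepsilon$. By \eqref{Vsc_initial}--\eqref{Vsc_initial-2} the right-hand side is bounded uniformly in $\varepsilon$.

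\textbf{Step 4: continuation.} The ODE inequality from Step 2 gives a time $\mathfrak T$ depending only on this uniform bound and on $\underline h,\overline h$. A standard continuation argument then extends the $\varepsilon$-dependent solution of Step 1 up to $\mathfrak T$: the $H^4$ norm cannot blow up before $\mathfrak T$, since for fixed $\varepsilon$ the parabolic smoothing upgrades the uniform $H^2$-level bound to higher norms. Taking square roots gives \eqref{energy-viscous}.
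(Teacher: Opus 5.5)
Your proposal is correct and takes the same route as the paper: parabolic local existence for fixed $\varepsilon$ as in \cite{Taylor}, then the energy estimates of Section~\ref{1DEnergyEstimates} rerun with the viscous term now contributing $-2\varepsilon\norm{\partial_x\Phi}{L^2}^2\le 0$, then a computation of $\mathcal E^\varepsilon(0)$ in which the only new loss of derivatives is the $\partial_x^4\eta_0^\varepsilon$ term in $\partial_t^2\eta^\varepsilon(0)$ (you correctly find it carries $\varepsilon^2$, so the paper's $\varepsilon\norm{\eta_0^\varepsilon-1}{H^4}$ is a harmless overestimate), and finally the continuation step, which the paper leaves implicit and you spell out. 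One caveat applies to both your argument and the paper's: $\partial_tV^\varepsilon(0)$ and $\partial_t^2V^\varepsilon(0)$ contain quadratic terms such as $\omega_0^\varepsilon\partial_x\omega_0^\varepsilon$, so $\mathcal E^\varepsilon(0)$ is bounded only by a polynomial in the data norms, and the constant in \eqref{energy-viscous} (and $\mathfrak T$) must be allowed to depend on $\varepsilon$-uniform bounds for the data, not only on $\underline h,\overline h$.
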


\begin{proof}
From Proposition \ref{EnergyProposition}, we only need to verify the initial data. In particular, thanks to similar expansion of the initial data as in \eqref{initial_data_time}, the only item we need to verify is $\varepsilon \norm{ \eta_0^{\varepsilon}-1}{H^4}$. However, notice that for the viscous system \eqref{ShallowWaterMomentumgeneralizedConsice}, we have
\[
\partial_t \eta^{\varepsilon}(t=0)=\varepsilon \partial_x^2 \eta^{\varepsilon}_0-\partial_x\omega^{\varepsilon}_0,
\]
and
\begin{align*}
    \partial_t^2 \eta^{\varepsilon}(t=0) &= -2\frac{\omega_0^{\varepsilon} }{(\eta_0^{\varepsilon})^2} \phi_0^{\varepsilon}\partial_x \omega_0^{\varepsilon}-\frac{(\omega_0^{\varepsilon})^2}{(\eta_0^{\varepsilon})^2 }\partial_x \phi_0^{\varepsilon}+\frac{2(\omega_0^{\varepsilon})^2}{(\eta_0^{\varepsilon})^3}\phi^{\varepsilon}_0\partial_x\eta^{\varepsilon}_0-\partial_x\eta_0^{\varepsilon} \partial_x^3\phi_0^{\varepsilon}-\eta_0^{\varepsilon}\partial_x^3 \phi_0^\varepsilon+(\partial_x\eta_0^{\varepsilon})^2  \\
    & +\eta_0^{\varepsilon}\partial_x^2 \eta^{\varepsilon}_0+\frac{2(\partial_x\omega_0^{\varepsilon})^2}{\eta_0^{\varepsilon}}+\frac{2\omega_0^{\varepsilon}}{\eta_0^{\varepsilon}}\partial_x^2 \omega_0^{\varepsilon}-\frac{2\omega_0^{\varepsilon}}{(\eta_0^{\varepsilon})^2 } \partial_x \eta^{\varepsilon}_0\partial_x \omega^{\varepsilon}_0-\varepsilon\partial_x^4\eta_0^{\varepsilon}, \\
\end{align*}
which, comparing to \eqref{initial_data_time}, implies that we need one more degree of space regularity for $\eta^{\varepsilon}_0-1$ than that of $\eta_0-1.$ 
Analogously to \eqref{est:energy}, one can show
\[
\mathfrak E^{\varepsilon}(t)\leq 2\mathfrak E^{\varepsilon}(0).
\]
This in turn implies
\[
\mathfrak E^{\varepsilon}(t)\lesssim \norm{\partial_t \eta_0^{\varepsilon}}{H^1}^2+\norm{\partial_t^2 \eta_0^{\varepsilon}}{L^2}^2+\norm{\omega_0^{\varepsilon}, \phi_0^{\varepsilon}}{H^4}^2\lesssim \varepsilon^2 \norm{\eta_0^{\varepsilon}-1}{H^4}^2+\norm{\eta_0^{\varepsilon}-1}{H^3}^2+\norm{\omega^{\varepsilon}_0,\phi_0^{\varepsilon}}{H^4}^2<\infty.
\]
Here $ \mathfrak E^\varepsilon $ is the energy for the approximating system \eqref{ShallowWaterMomentumgeneralized} in analogy to \eqref{intrmd-energy}. This finishes the proof. 
\end{proof}

\subsection{Existence of solutions}
\label{Ex-U}
With the uniform estimate \eqref{energy-viscous}, together with \eqref{Vsc_initial} and \eqref{Vsc_initial-2}, one can pass the limit $ \varepsilon \rightarrow 0 $ and, up to a subsequence, obtain
\begin{align}
    (\eta^\varepsilon - 1, ~ \omega^\varepsilon,~ \phi^\varepsilon) & \overset{*}{\rightharpoonup}  (\eta -1, ~ \omega, ~ \phi) & \text{weakly-$*$ in} ~ L^\infty([0,\mathfrak{T}];H^2(\mathbb R)),
\end{align}
for the same $\mathfrak{T}$ given in Proposition \ref{WellPosedViscousProblem}. By the Aubin-Lions compactness lemma, 
\begin{align}
\label{1-d-cnv:strong}
    (\eta^\varepsilon - 1, ~ \omega^\varepsilon, \phi^\varepsilon) & \rightarrow  (\eta -1, ~ \omega, ~ \phi) & \text{strongly in} ~ C([0,\mathfrak{T}];H^1_{loc}(\mathbb R)),
\end{align}
for some $ V = (\eta-1,\omega,\phi) \in L^\infty([0,\mathfrak{T}];H^2(\mathbb R)) $, satisfying 
\begin{equation}
        \label{energy-viscous-2}
        \sup_{0\leq t \leq T} \sum_{\substack{i + j \leq 2, \\
        i \geq 0, j \geq 0}} \norm{\partial_t^i\partial_x^j V(t)}{L^2} \leq C (\norm{\eta_0-1}{H^3} + \norm{\omega_0,\phi_0}{H^{4}} ),
    \end{equation}
thanks to \eqref{Vsc_initial} and \eqref{Vsc_initial-2}. Here $ H_{loc}^1(\mathbb R) $ representing the local $ H^1 $ space. That is, the strong convergence \eqref{1-d-cnv:strong} holds inside arbitrary compact subset of $ \mathbb R $. 

It is straightforward to verify that $ V $ solves \eqref{ShallowWaterV} with the above weak and strong compactness. 

\subsection{Uniqueness and continuous dependence on initial data}
\label{subsec:well-posedness-generalized}

Let $\mathcal V, \tilde{\mathcal{V}} \in L^{\infty}([0,\mathfrak{T}];H^2(\mathbb{R}))$ be two solutions to \eqref{ShallowWaterV} with initial data $\mathcal V(0,x) = \mathcal V_0:= (\eta_0-1,\omega_0,\phi_0), \tilde{\mathcal{V}}(0,x) = \tilde{\mathcal V}_0 :=(\tilde{\eta}_0-1,\tilde{\omega}_0,\tilde{\phi}_0)\in H^3(\mathbb{R}) \times H^4(\mathbb{R}) \times H^4(\mathbb{R})$, respectively. Then $\delta\mathcal{V}:=\mathcal{V}-\tilde{\mathcal{V}}=(\delta\eta,\delta\omega,\delta\phi)^\top$ solves
\[
\begin{cases}
A_0(\tilde{\mathcal{V}})\partial_t\delta\mathcal{V}+A_1(\tilde{\mathcal{V}}) \partial_x \delta\mathcal{V}+A_2 \partial_x^2 \delta \mathcal{V}=\begin{pmatrix}
    0 \\ 
    N(\mathcal{V},\tilde{\mathcal{V}}) \\
    0 \\
\end{pmatrix},\\
    \delta\mathcal{V}(0,x)=\delta\mathcal{V}_0=(\eta_0-\tilde{\eta}_0,\omega_0-\tilde{\omega}_0,\phi_0-\tilde{\phi}_0),
\end{cases}
\]
with
\[
N(\mathcal{V},\tilde{\mathcal{V}}):=\frac{\omega^2}{\eta^3} \phi-\frac{\tilde{\omega}^2}{\tilde{\eta}^3} \tilde{\phi}-\underbrace{[A_0(\mathcal{V})-A_0(\tilde{\mathcal V})]}_{\in L^{2}}\underbrace{\partial_t \mathcal V}_{\in L^\infty}-\underbrace{[A_1(\mathcal{V})-A_1(\tilde{\mathcal V})]}_{\in L^{2}}\underbrace{\partial_x \mathcal V}_{\in L^\infty}.
\]
Multiplying by $\delta\mathcal{V}^\top$ and integrating in space, the $L^2$ energy estimate for $\delta \mathcal V $ reads
\begin{equation*}
    \int \delta\mathcal{V}^\top A_0(\tilde{V})\partial_t \delta\mathcal{V}+\delta\mathcal{V}^\top A_1(\tilde{V}) \partial_x \delta\mathcal{V}+\delta\mathcal{V}^\top A_2 \partial_x^2 \delta\mathcal
    V dx=\int(\omega-\tilde{\omega}) N(\mathcal{V},\tilde{\mathcal{V}}) dx.
\end{equation*}
Denote $\tilde{\mathcal{E}}(t)$ the sum of the energy functionals for $\mathcal{V}$ and $\tilde{\mathcal{V}}$. A direct estimate yields
\begin{align*}
& \frac{1}{2}\frac{d}{dt}\norm{\delta\mathcal{V}}{L^2}^2\lesssim \tilde{\mathcal{P}}(\tilde{\mathcal{E}}(t)^{\frac{1}{2}}) \norm{\delta \mathcal{V}}{L^2}^2,
\end{align*}
for some positive polynomial $\tilde{\mathcal{P}}$. Now Grönwall's inequality implies 
\begin{equation}
\label{ContDepend}
\norm{\delta\mathcal{V}}{L^2}^2\lesssim \exp \left(\int_0^{\mathfrak{T}} \tilde{\mathcal{P}}(\tilde{\mathcal{E}}(t)^{\frac{1}{2}}) dt\right) \norm{\delta\mathcal{V}_0}{L^2}^2.
\end{equation}
In particular, if $\mathcal{V}_0=\tilde{\mathcal{V}}_0$, then $\mathcal{V}\equiv \tilde{\mathcal{V}}.$ This finishes the proof of well-posedness of the generalized system \eqref{ShallowWaterV}.

\section{Well-posedness of the 1d shallow water system \eqref{ShallowWaterMomentumConsice}}
\label{Sect:WP}

We now are ready to establish well-posedness system \eqref{ShallowWaterMomentumConsice} for initial data with regularity $(h_0-1,m_0)\in H^3(\mathbb R)\times H^2 (\mathbb R)$. To do so, we need elliptic estimates and energy estimates for the original shallow water system \eqref{ShallowWaterMomentumConsice}. We emphasize that the elliptic estimates are not available for the generalized system \eqref{ShallowWaterV}; see Remark \ref{rm:non-elliptic-est-general-sys}, below. 
\subsection{Elliptic estimates} We define the evolutionary energy 
\begin{equation}
    \label{EnergyEvolution}
    \mathcal F(t) := \sum_{0\leq i\leq 1} \norm{\partial_t^i (h-1),\frac{\partial_t^i m}{\sqrt h}, \partial_t^i \partial_x h}{L^2}^2,
\end{equation}
and the elliptic energy 
\begin{equation}
\label{EnergyElliptic}
    \mathfrak F(t):=\sum_{0\leq j\leq 2} \Vert \partial_x ^j (h-1),\frac{\partial_x^j m}{\sqrt{h}},\partial_x^{j}\partial_x h\Vert_{L^2}^2.
\end{equation}
Similar to \eqref{non-degenerate}, we make the {\it a priori} assumption for $ h $:
\begin{equation}
    \label{non-degenerate-h}
  0<\frac{1}{2}\underline h \leq \inf_{x,t} h(x,t) \leq \sup_{x,t} h(x,t) \leq 2 \overline h.
\end{equation}
We make the following observation:
\begin{lemma}
    \label{EllipticLemma} 
    Under the {\it a priori} assumption \ref{non-degenerate-h},
   there exists a positive polynomial $\mathcal{G}_1$ 
   such that
   \begin{equation}
   \label{EllipticEstimate1}
       \mathfrak F(t)\leq \mathcal{G}_1(\mathcal{F}(t)).
   \end{equation}

\begin{proof}
For the original system \eqref{ShallowWaterMomentumConsice}, we have
        \begin{gather}
        \label{EllipticHeight}
            \norm{\partial_x m}{L^2}= \norm{\partial_t h}{L^2}, \\
            \label{EllipticHeightD}
            \norm{\partial_x^2 m}{L^2}=\norm{\partial_t \partial_x h}{L^2}.
        \end{gather}
    At the same time, applying integration by parts, one has that
    \begin{equation}
            \label{EllipticMomentum}
    \begin{gathered}
            \norm{\partial_x^3 h}{L^2}^2+\norm{\partial_x h}{L^2}^2 + 2 \norm{\partial_x^2 h}{L^2}^2 = \norm{-\partial_{x}^3h + \partial_x h}{L^2}^2\\
            = \norm{\frac{1}{h}\partial_t m + \frac{2 m \partial_x m}{h^2} - \frac{m^2 \partial_x h}{h^3}}{L^2}^2 \lesssim \norm{\partial_t m}{L^2}^2+\norm{m}{H^1}^4+\norm{m}{H^1}^4 \norm{\partial_x h}{L^2}^2,
        \end{gathered}
    \end{equation}
    where we have applied \eqref{non-degenerate-h}.
This finishes the proof of \eqref{EllipticEstimate1}.

\end{proof}
\end{lemma}
\begin{remark}
\label{rm:non-elliptic-est-general-sys}
    Lemma \ref{EllipticLemma} does not hold for the generalized system \eqref{ShallowWaterV}. Indeed, in the $(\eta-1,\omega,\phi)$-variable, the corresponding estimate \eqref{EllipticMomentum} becomes 
    \[
    \norm{-\partial_x^2 \phi+\partial_x h}{L^2}^2 \lesssim \norm{\partial_t \omega}{L^2}^2+\norm{\omega}{H^1}^4+\norm{\omega}{H^1}^2\norm{\phi}{L^2}.
    \]
    In particular, since $ \phi \neq \partial_x h $ in general, one cannot obtain the higher order regularity of $ h $ from the left hand side of the above estimate.

    By making use of the elliptic estimate \eqref{EllipticEstimate1} for the shallow water system \eqref{ShallowWaterMomentumConsice}, we will be able to obtain sharper regularity for the solutions.
    
\end{remark}
\subsection{Energy estimates} For the shallow water system \eqref{ShallowWaterMomentumConsice}, we have the following analog to Lemma \ref{LemmaPhi}:
\begin{lemma}
\label{EnergyOriginal}
    Let $\Phi:\mathbb R \times\mathbb R\to\mathbb{R}^3$ be the solution to the linear equation
    \[
    A_0(U)\partial_t \Phi +A_1(U)\partial_x \Phi+A_2 \partial_x^2 \Phi=G.
    \]
    Then for a.e. $t$, we have
    \begin{equation}
        \frac{d}{dt} (A_0(U)\Phi,\Phi)\leq C_{\underline{h},\overline{h}} ( \mathcal{F}(t) + \mathfrak F(t) ) \norm{\Phi}{L^2}^2+2\norm{G}{L^2} \norm{\Phi}{L^2}.
    \end{equation}
\end{lemma}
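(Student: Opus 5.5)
The plan is to repeat the proof of Lemma \ref{LemmaPhi}, with the coefficients now evaluated at the solution $U=(h-1,m,\partial_x h)$ of the original system \eqref{ShallowWaterMomentumConsice}. The only change is that the $L^\infty$ bounds on the coefficients will be controlled by $\mathcal F(t)+\mathfrak F(t)$ instead of $\mathcal E(t)$.

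First, take the $L^2$ inner product of the equation with $2\Phi$. Since $A_0(U)$ and $A_1(U)$ are symmetric and $A_2$ is a constant skew-symmetric matrix, integration by parts gives
\[
\frac{d}{dt}(A_0(U)\Phi,\Phi) = (\partial_t A_0(U)\Phi,\Phi) + (\partial_x A_1(U)\Phi,\Phi) + 2(A_2\partial_x\Phi,\partial_x\Phi) + 2(G,\Phi).
\]
The third term on the right vanishes. By H\"older's inequality,
\[
\frac{d}{dt}(A_0(U)\Phi,\Phi)\le \norm{\partial_t A_0(U),\partial_x A_1(U)}{L^\infty}\norm{\Phi}{L^2}^2+2\norm{G}{L^2}\norm{\Phi}{L^2}.
\]

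The remaining step, and the only one with content, is to bound the coefficient norm. Under \eqref{non-degenerate-h}, $\partial_t A_0$ involves only $h^{-2}\partial_t h$. The entries of $\partial_x A_1$ are $h^{-2}\partial_x m$ and $m h^{-3}\partial_x h$. Hence
\[
\norm{\partial_t A_0,\partial_x A_1}{L^\infty}\le C_{\underline h,\overline h}\bigl(\norm{\partial_t h}{L^\infty}+\norm{\partial_x m}{L^\infty}+\norm{m}{L^\infty}\norm{\partial_x h}{L^\infty}\bigr).
\]
Two facts handle this. First, $\partial_t h=-\partial_x m$ by \eqref{1-d-Mass}. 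Second, the 1d Sobolev embedding $\norm{\cdot}{L^\infty}\lesssim\norm{\cdot}{H^1}$ holds. Together they bound the right-hand side by a multiple of
\[
\norm{\partial_x m,\partial_x^2 m}{L^2}+\norm{m}{H^1}^2+\norm{\partial_x h,\partial_x^2 h}{L^2}^2.
\]
Since $h\le 2\overline h$, we have $\norm{\partial_x^j m}{L^2}\lesssim\norm{\partial_x^j m/\sqrt h}{L^2}$. All of these quantities therefore appear in $\mathfrak F(t)$, as defined in \eqref{EnergyElliptic}. Alternatively, \eqref{EllipticHeight}--\eqref{EllipticHeightD} bound $\partial_x m,\partial_x^2 m$ directly by $\mathcal F(t)$.

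Using $a\le a^2+1$, the coefficient is bounded by $C_{\underline h,\overline h}(\mathcal F(t)+\mathfrak F(t)+1)$. This has the same form as \eqref{phi:003}, with $\mathcal E$ replaced by $\mathcal F+\mathfrak F$.

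The one point needing care is the additive constant. The statement as written has no ``$+1$''. The linear terms such as $\norm{\partial_x m}{L^\infty}$ are not bounded by the quadratic quantity $\mathcal F+\mathfrak F$ when the energy is small. To remove the constant, I would use that the energy is bounded below: $\mathcal F(t)\ge\norm{h-1}{L^2}^2$, and in the a priori regime the energy is comparable to its initial value, which is positive for nontrivial data. This lets the constant be absorbed into $C_{\underline h,\overline h}(\mathcal F+\mathfrak F)$. Otherwise I would simply carry the harmless ``$+1$'', exactly as in Lemma \ref{LemmaPhi}. Either version suffices for the subsequent Gr\"onwall argument.
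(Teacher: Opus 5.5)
Your argument is the paper's argument. The paper's entire proof is the remark that the lemma ``follows with the same arguments as in Lemma \ref{LemmaPhi}''. Your energy identity, the use of $\partial_t h=-\partial_x m$, and the control of $\|\partial_t A_0(U),\partial_x A_1(U)\|_{L^\infty}$ through $\mathfrak F$ (or through \eqref{EllipticHeight}--\eqref{EllipticHeightD}) are exactly that argument.

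You are also right about the additive constant. The only nonzero entry of $\partial_t A_0+\partial_x A_1$ is $3h^{-2}\partial_x m-4mh^{-3}\partial_x h$. This is linear in the perturbation, while $\mathcal F+\mathfrak F$ is quadratic. So the same arguments only yield $C_{\underline h,\overline h}(\mathcal F+\mathfrak F+1)$, as in \eqref{LemmaPhiEstimate}. Carrying the $+1$ is what the paper's proof actually delivers, and it is all that Proposition \ref{apriori-Original} needs.

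What does not work is your proposed way of removing the constant. The a priori regime of Proposition \ref{apriori-Original} supplies only the upper bound $\mathcal F(t)\le 2\mathcal F(0)$. Moreover, that proposition is itself proved using this lemma. So appealing to it here is circular, and in any case it gives no lower bound on the energy.

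If you want the inequality exactly as printed, use a conservation law instead. For solutions of \eqref{sys:1-d_sw}, the Hamiltonian $\mathcal H=\int \frac{m^2}{2h}+\frac12(h-1)^2+\frac12(\partial_x h)^2\,dx$ is conserved, and the computation is justified at the regularity $h-1\in H^3$, $m\in H^2$. Hence $\mathcal F(t)\ge 2\mathcal H(0)$. The one-dimensional inequality $\|f\|_{L^\infty}^2\le\|f\|_{L^2}\|\partial_x f\|_{L^2}$ applied to $f=h_0-1$ gives $\mathcal H(0)\ge\|h_0-1\|_{L^2}\|\partial_x h_0\|_{L^2}\ge(\overline h-1)^2>0$. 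Therefore $1\le \mathcal F(t)/(2(\overline h-1)^2)$, and the constant can be absorbed with $C$ still depending only on $\underline h,\overline h$.
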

The proof of Lemma \ref{EnergyOriginal} follows with the same arguments as in Lemma \ref{LemmaPhi}.

\smallskip 

One can similarly prove the analog of Proposition \ref{prop:a-priori-V} for $\mathcal{F}.$ That is:
\begin{proposition}
    \label{apriori-Original}
    The solution to \eqref{ShallowWaterMomentumConsice} satisfies the uniform bound
    \[
    \sup_{0\leq t\leq {T}} \mathcal{F}(t)\leq 2 \mathcal{F}(0),
    \]
    for some $T\in (0,\infty)$ depending only on the initial data and $\underline h$, $\overline{ h}.$
\end{proposition}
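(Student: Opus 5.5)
We propose to mimic the proof of Proposition \ref{prop:a-priori-V}, with one structural change. Only time derivatives of order at most one are measured in $\mathcal F$. All spatial regularity is then recovered from $\mathcal F$ through Lemma \ref{EllipticLemma}, and this is what allows the weaker initial regularity $(h_0-1,m_0)\in H^3\times H^2$.

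Throughout, we work under the \emph{a priori} assumption \eqref{non-degenerate-h}. By the same argument as in Proposition \ref{prop:a-priori-assumption}, this assumption is justified a posteriori for a short time once $\mathcal F$ is bounded, since $\norm{\partial_t h}{L^\infty}\lesssim \norm{\partial_t h,\partial_t\partial_x h}{L^2}\lesssim\mathcal F^{1/2}$.

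\textbf{Step 1: zeroth-order estimate.} We apply Lemma \ref{EnergyOriginal} with $\Phi=U$ and $G=S(U)$. Observe that $(A_0(U)U,U)$ equals the $i=0$ part of $\mathcal F$, because the middle component contributes $\norm{m/\sqrt h}{L^2}^2$. Using \eqref{non-degenerate-h} and $\norm{m}{L^\infty}\lesssim\norm{m}{H^1}$, we bound
\[
\norm{S(U)}{L^2}\lesssim \norm{m}{L^\infty}^2\norm{\partial_x h}{L^2}\le \mathcal P(\mathfrak F^{1/2}).
\]

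\textbf{Step 2: first time derivative.} We differentiate \eqref{ShallowWaterMomentumConsice} once in $t$ to get
\[
A_0\partial_t U_t+A_1\partial_xU_t+A_2\partial_x^2U_t=G_t,\qquad G_t:=\partial_tS-\partial_tA_0\,\partial_tU-\partial_tA_1\,\partial_xU.
\]
We then apply Lemma \ref{EnergyOriginal} with $\Phi=U_t$. To bound $\norm{G_t}{L^2}$, we note the following.
\begin{itemize}
\item $\partial_tA_0$ and $\partial_tA_1$ involve only $\partial_th$ and $\partial_tm$ times smooth functions of $(h,m)$, and these are bounded in $L^\infty$ by $\norm{\partial_th}{H^1}$ together with $\norm{\partial_tm}{L^2}$ multiplied against $\norm{\partial_xU}{L^\infty}$.
\item $\norm{\partial_xU}{L^\infty}\lesssim\norm{\partial_x U}{H^1}$, and by \eqref{EnergyElliptic} this needs $\partial_x^3h\in L^2$ and $\partial_x^2m\in L^2$, both controlled by $\mathfrak F$.
\item Terms of the form $\partial_t m\cdot(\text{something in }L^\infty)$ are placed in $L^2$ using $\norm{\partial_tm}{L^2}\lesssim\mathcal F^{1/2}$.
\item $\partial_tS$ contains $\partial_t\partial_xh$, which lies in $L^2$ since it is part of $\mathcal F$.
\end{itemize}
Altogether, $\norm{G_t}{L^2}\le\mathcal P(\mathcal F^{1/2}+\mathfrak F^{1/2})$.

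\textbf{Step 3: closing the estimate.} Summing the two steps gives
\[
\frac{d}{dt}\sum_{i\le1}(A_0\partial_t^iU,\partial_t^iU)\le \mathcal P(\mathcal F^{1/2}+\mathfrak F^{1/2}).
\]
The left-hand quantity is exactly $\mathcal F$ written in the weighted form, so no equivalence constants are needed beyond $C_{\underline h,\overline h}$. We insert Lemma \ref{EllipticLemma}, $\mathfrak F\le\mathcal G_1(\mathcal F)$, to obtain the closed inequality $\frac{d}{dt}\mathcal F\le\mathcal P(\mathcal F)$. Comparison with the corresponding ODE gives $\mathcal F(t)\le2\mathcal F(0)$ on a short interval $[0,T]$ that depends only on $\mathcal F(0)$, $\underline h$ and $\overline h$. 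A standard bootstrap together with Step 0 finishes the argument.

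Finally, $\mathcal F(0)$ is finite for $(h_0-1,m_0)\in H^3\times H^2$. Indeed, $\partial_th(0)=-\partial_xm_0\in H^1$, and from the momentum equation $\partial_tm(0)$ involves $h_0\partial_x^3h_0$, which lies in $L^2$.

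\textbf{Main obstacle.} The delicate point is Step 2: checking that every term of $G_t$ is bounded using only one time derivative plus the elliptically recovered spatial derivatives, with no hidden $\partial_t^2$ or $\partial_x^4h$. The danger term is $\partial_tA_1\,\partial_xU$, in particular $\frac{\partial_tm}{h^2}\partial_xm$. Here $\partial_tm$ is only in $L^2$, so the bound must put $\partial_xm$ in $L^\infty$, which requires $\partial_x^2m\in L^2$. That is exactly why \eqref{EllipticHeightD}, $\norm{\partial_x^2m}{L^2}=\norm{\partial_t\partial_xh}{L^2}$, is indispensable.
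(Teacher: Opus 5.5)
Your proposal is correct and follows essentially the same route as the paper: apply Lemma \ref{EnergyOriginal} to $U$ and to $U_t$ with source $\mathfrak G_t=\partial_tS-\partial_tA_0\,\partial_tU-\partial_tA_1\,\partial_xU$, bound $S$ and $\mathfrak G_t$ in $L^2$ using Lemma \ref{EllipticLemma} plus Sobolev embedding to reach $\frac{d}{dt}\mathcal F\le\mathcal G_2(\mathcal F)$, and justify \eqref{non-degenerate-h} as in Section \ref{Justification}. Your term-by-term check of $\mathfrak G_t$ makes explicit what the paper leaves to ``embedding inequalities''; in particular it puts $\partial_x m$ in $L^\infty$ via $\norm{\partial_x^2 m}{L^2}=\norm{\partial_t\partial_x h}{L^2}$, since $\partial_tA_1$ itself is only in $L^2$, not $L^\infty$.
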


\begin{proof}
    Proposition \ref{apriori-Original} follows from an argument similar to that in Section \ref{1DEnergyEstimates}. Here we only sketch the main steps. We only consider, for the shallow water system \eqref{ShallowWaterMomentumConsice}, 
    \begin{equation}
    \label{sys:dd-Original}
    A_0(U)\partial_t U_t+A_1(U)\partial_x U_t+A_2\partial_{x}^2 U_t=\mathfrak G_t,
    \end{equation}
    where 
    \begin{align*}
        & \mathfrak G_t:=\partial_t S(U)-\partial_t A_0(U) \partial_t U-\partial_t A_1(U)\partial_x U.
    \end{align*}
    Applying Lemma \ref{EnergyOriginal} to systems \eqref{ShallowWaterMomentumConsice} and \eqref{sys:dd-Original}, we obtain
\begin{equation}
    \label{h2:001-original}
    \frac{d}{dt} \mathcal F(t) \leq C_{\overline {h},\underline{h}} (\mathcal{F}(t) + \mathfrak F(t)) \norm{U,U_{t}}{L^2}^2 +2\norm{{S(U)},\mathfrak{G}_{t}}{L^2}\norm{U,U_{t}}{L^2} \leq \mathcal G_2(\mathcal F(t)),
\end{equation}
for some positive polynomial $ \mathcal G_2 $, thanks to Lemma \ref{EllipticLemma} and embedding inequalities. Integrating \eqref{h2:001-original} finishes the energy estimate. Moreover, following the argument as in Section \ref{Justification}, one can establish the justification of \eqref{non-degenerate-h}. This finishes the proof. 
\end{proof}
\subsection{Proof of Theorem \ref{thm:1-d}}
By taking $\eta=h$, $\omega=m$, and $\phi=\partial_x h$ in system \eqref{ShallowWaterV}, for $(h_0-1,m_0)\in H^5(\mathbb{R})\times H^4(\mathbb{R})$, Proposition \ref{EnergyProposition} implies that there exists a unique solution $ (h-1, m)\in H^3(\mathbb{R})\times H^2(\mathbb{R}) $ to the shallow water system \eqref{ShallowWaterMomentumConsice}.

In general, for $ (h_0-1, m_0) \in H^3(\mathbb R) \times H^2(\mathbb R) $, one chooses a sequence of initial data $ (h_{k,0} - 1, m_{k,0}) \in H^5(\mathbb R) \times H^4(\mathbb R)$, $ k \in \mathbb N $, such that
\begin{equation}
\label{approximation_initial_k}
\begin{aligned}
    h_{k,0} - 1 & \rightarrow h_0 - 1  & \text{in} \ H^3(\mathbb R),\\
    m_{k,0} & \rightarrow m_0 & \text{in} \ H^2(\mathbb R), 
\end{aligned}
\end{equation}
and the initial data
\begin{equation}
    \label{V_k}
    V_{k,0} := \begin{pmatrix}
        h_{k,0} - 1 \\ m_{k,0} \\ \partial_x h_{k,0}
    \end{pmatrix} \in H^3(\mathbb{R}) \times H^4(\mathbb{R}) \times H^4(\mathbb{R}). 
\end{equation}
The well-posedness theory of the generalized system \eqref{ShallowWaterV} implies that for each fixed $k=1,2,\ldots$, there exists $ T_k \in (0,\infty) $ such that we have a unique solution $ V_k :=( h_k -1, m_k, \phi_k )^\top $. 
In particular, $ (h_k-1, m_k) $ is a solution to the shallow water system \eqref{ShallowWaterMomentumConsice}. 

Applying Proposition \ref{apriori-Original} and Lemma \ref{EllipticLemma}, one obtains the following estimate:
\begin{equation}
\label{EllipticUniform}
\begin{gathered}
\sup_{0\leq t \leq \mathfrak{T}'} \bigl\lbrace 
\norm{h_k(t)-1}{H^3}^2+\norm{m_k(t)}{H^2}^2 + \norm{h_k(t)-1, \partial_t h_k(t)}{H^1}^2 + \norm{m_k(t), \partial_t m(t)}{L^2}^2 \bigr\rbrace \\
\lesssim \norm{\partial_t m_{k,0},m_{k,0}}{L^2}^2+\norm{\partial_t h_{k,0}, h_{k,0}- 1}{H^1}^2 \lesssim  \norm{h_{k,0}-1}{H^3}^2 + \norm{m_{k,0}}{H^2}^2,
\end{gathered}
\end{equation}
for some sufficiently short time $\mathfrak{T}'>0$ independent of $k$. The last inequality follows from directly using the equations \eqref{ShallowWaterMomentumConsice} to obtain the initial data for the time derivatives, similar to \eqref{initial_data_time}.
Therefore, by the convergence \eqref{approximation_initial_k} and the bound \eqref{EllipticUniform}, passing the limit $ k \rightarrow \infty $, 
one obtains 
the unique solution $(h-1,m)\in L^{\infty}([0,\mathfrak{T}'];H^3(\mathbb R))\times L^{\infty}([0,\mathfrak{T}'];H^2(\mathbb R))$ to the shallow water system \eqref{ShallowWaterMomentumConsice}. The uniqueness and continuous dependence on the initial data follow with arguments similar to those in Section \ref{subsec:well-posedness-generalized}. This concludes the proof of Theorem \ref{thm:1-d} after recalling $u=\frac{m}{h}$.

\section{{\it A priori} estimate for two dimensional flow}
\label{sec:2d} 

\subsection{Symmetric reformulation}
\label{subsec:2d-reformulation}

We will turn our focus to the case of multi-dimensions in system \eqref{ShallowWaterEquations}. In this section, we consider the case when $ n = 2 $, i.e., the two dimensional flow. To be more precise, we will only establish the {\it a priori} estimate using the symmetric structure. The local well-posedness follows via similar arguments as in Sections \ref{vv}--\ref{Sect:WP}.

For $ \mathbb R^2 = \lbrace (x_1, x_2) \rbrace $, we write, for $j =1,2 $,
\begin{equation}
    \label{def:2d-drvt}
    \partial_j := \partial_{x_j}.
\end{equation}
We write, for any vector field $ v = (v_1, v_2)^\top $, 
\[
v^{\perp}:=\begin{pmatrix}
    -v_2 \\
    v_1
\end{pmatrix} = J v, \quad\nabla^{\perp}:= \begin{pmatrix}
    -\partial_2 \\
    \partial_1 \\
\end{pmatrix} = J \nabla ,
\]
where
\begin{equation}
\label{def:anti-matrix}
    J:= \begin{pmatrix}
        0 & -1 \\ 1 & 0
    \end{pmatrix},
\end{equation}
and we consider the 2d flow with the non-degenerate condition
\begin{equation}
    \label{non-degenerate-nD}
    0<\frac{1}{2}\underline h \leq \inf_{x,t} h(x,t) \leq \sup_{x,t} h(x,t) \leq 2 \overline h < \infty. 
\end{equation}

Similarly as before, consider the momentum variable $ m = (m_1,m_2)^\top:= hu = h(u_1,u_2)^\top $. Recall that the shallow water system \eqref{ShallowWaterEquations} in 2d can be written as in \eqref{ShallowWaterMomentum}. We point out, that unlike the 1d case as in \eqref{ShallowWaterMomentumConsice}, the 2d flow \eqref{ShallowWaterMomentum} is not a symmetric system. Indeed, by writing $ U:= (h -1, m, \nabla h)^\top = (h - 1, m_1, m_2, \partial_1 h, \partial_2 h) \in \mathbb R^5 $, one has
\begin{equation}
\label{ShallowWaterMomentum2D}
\diag\left(1,\frac{1}{h},\frac{1}{h},1,1\right) \partial_t U+M_1 \partial_1 U +M_2\partial_2 U +M_3\partial_1^2 U +M_4\partial_2^2 U +M_5\partial_{1}\partial_2 U=S,
\end{equation}
where
\begin{equation}
    \label{def:M-2d}
\begin{aligned}
M_1 & :=\begin{pmatrix}
    0 & 1 & 0 & 0 & 0 \\
    1 & \frac{2m_1}{h^2} & 0 & 0 & 0 \\
    0 & {\frac{m_2}{h^2}} & \frac{m_1}{h^2} & 0 & 0 \\
    0 & 0 & 0 & 0 & 0 \\
    0 & 0 & 0 & 0 & 0 \\
\end{pmatrix}, & M_2 &:=
\begin{pmatrix}
    0 & 0 & 1 & 0 & 0 \\
    0 & \frac{m_2}{h^2} & {\frac{m_1}{h^2}} & 0 & 0 \\
    1 & 0 & \frac{2m_2}{h^2} & 0 & 0 \\
    0 & 0 & 0 & 0 & 0 \\
    0 & 0 & 0 & 0 & 0 \\
\end{pmatrix}, \\
M_3& :=\begin{pmatrix}
    0 & 0 & 0 & 0 & 0 \\
    0 & 0 & 0 & -1 & 0 \\
    0 & 0 & 0 & 0 & 0 \\
    0 & 1 & 0 & 0 & 0 \\
    0 & 0 & 0 & 0 & 0 \\
\end{pmatrix},
&
M_4 &:=\begin{pmatrix}
    0 & 0 & 0 & 0 & 0 \\
    0 & 0 & 0 & 0 & 0 \\
    0 & 0 & 0 & 0 & -1 \\
    0 & 0 & 0 & 0 & 0 \\
    0 & 0 & 1 & 0 & 0 \\
\end{pmatrix}, \\
M_5 &:=\begin{pmatrix}
    0 & 0 & 0 & 0 & 0 \\
    0 & 0 & 0 & 0 & -1 \\
    0 & 0 & 0 & -1 & 0 \\
    0 & 0 & 1 & 0 & 0 \\
    0 & 1 & 0 & 0 & 0 \\
\end{pmatrix}, & S&:=\begin{pmatrix}
    0 \\
    \frac{1}{h^3} m_1 (m_1 \partial_1 h + m_2 \partial_2 h) \\
    \frac{1}{h^3} m_2 (m_1 \partial_1 h + m_2 \partial_2 h)  \\
    0 \\
    0 \\
\end{pmatrix}.
\end{aligned}
\end{equation}
In particular, $ M_1 $ and $ M_2 $ are not symmetric matrices. Therefore, directly applying energy estimate as in Section \ref{1DEnergyEstimates} will yield a loss of regularity. To remedy this, we isolate the non-symmetric part of system \eqref{ShallowWaterMomentum2D} by writing
\begin{equation}
    \label{def:2d-non-sym}
    K:= B_1 \partial_1 m + B_2 \partial_2 m
\end{equation}
with
\begin{equation}
    \label{def:non-sym-matrix-2d}
    B_1 := \begin{pmatrix}
        \frac{2m_1}{h^2} & 0 \\
        \frac{m_2}{h^2} & \frac{m_1}{h^2}
    \end{pmatrix}, \quad B_2 := \begin{pmatrix}
        \frac{m_2}{h^2} & \frac{m_1}{h^2} \\
        0 & \frac{2m_2}{h^2}
    \end{pmatrix}.
\end{equation}
Now, write 
\begin{equation}
    \label{2d-001}
    B_1 =  \underbrace{\begin{pmatrix}
        \frac{2m_1}{h^2} & \frac{m_2}{2h^2} \\
        \frac{m_2}{2h^2} & \frac{m_1}{h^2}
    \end{pmatrix}}_{:= B_\mathrm{1,sym}} + \frac{m_2}{2h^2} J, \qquad B_2 =  \underbrace{\begin{pmatrix}
        \frac{m_2}{h^2} & \frac{m_1}{2h^2} \\
        \frac{m_1}{2h^2} & \frac{2m_2}{h^2}
    \end{pmatrix}}_{:= B_\mathrm{2,sym}} - \frac{m_1}{2h^2}J.
\end{equation}
Here $ J $ is the anti-symmetric matrix given in \eqref{def:anti-matrix}.
Then one can write $ K $ from \eqref{def:2d-non-sym} as
\begin{equation}
    \label{2d-K-good-form}
    K = B_\mathrm{1,sym} \partial_1 m + B_\mathrm{2,sym} \partial_2 m + \underbrace{\frac{m_2}{2h^2} J \partial_1 m - \frac{m_1}{2h^2} J \partial_2 m}_{=:K'}.
\end{equation}
We can rewrite
\begin{equation}
    \label{2d-002}
    \begin{gathered}
    K' = \frac{m_2}{2h^2} J \nabla m_1 - \frac{m_1}{2h^2} J \nabla m_2 + \frac{m_2}{2h^2}(J\partial_1 m - J\nabla m_1) + \frac{m_1}{2h^2} (J\nabla m_2 - J\partial_2 m) \\
    = \frac{m_2}{2h^2} \nabla^\perp m_1 - \frac{m_1}{2h^2} \nabla^\perp m_2 + \frac{m_2}{2h^2} J (\nabla_\mathrm{skw}m)_1 - \frac{m_1}{2h^2} J (\nabla_\mathrm{skw}m)_2,
    \end{gathered}
\end{equation}
where 
\begin{equation}
    \label{2d-003}
    \nabla_\mathrm{skw} m := \nabla m - (\nabla m)^\top = (\partial_1 m - \nabla m_1, \partial_2 m - \nabla m_2) = \begin{pmatrix}
        0 & - \curl m \\
        \curl m & 0
    \end{pmatrix}
\end{equation}
is the anti-symmetric gradient of $ m $, and $(\nabla_\mathrm{skw} m)_i $
is the $ i $-th column of $\nabla_{\mathrm{skw}}m$. Therefore, $ K' $, written in the form of \eqref{2d-002}, is the sum of the vorticity (the $\nabla_\mathrm{skw} m$ terms) and the dual vorticity (the $J\nabla m_j = \nabla^\perp m_j$, $ j = 1,2$ terms). Thus, the shallow water equations in 2D \eqref{ShallowWaterMomentum2D} can be written as
\begin{equation}
    \label{ShallowWaterMomentum2D-1}
    M_0\partial_t U+M_1' \partial_1 U +M_2' \partial_2 U +M_3\partial_1^2 U +M_4\partial_2^2 U +M_5\partial_{1}\partial_2 U=S + S',
\end{equation}
where $ M_3, M_4, M_5, S $ are given as in \eqref{def:M-2d} and
\begin{equation}
    \label{def:M-2d-1}
    \begin{aligned}
    M_0& :=\diag\left(1,\frac{1}{h},\frac{1}{h},1,1\right), \quad    M_1' :=\begin{pmatrix}
        0 & 1 & 0 & 0 & 0 \\
        1 & \frac{2m_1}{h^2} & {\frac{m_2}{2h^2}} & 0 & 0 \\
        0 & {\frac{m_2}{2h^2}} & \frac{m_1}{h^2} & 0 & 0 \\
        0 & 0 & 0 & 0 & 0 \\
        0 & 0 & 0 & 0 & 0 \\
    \end{pmatrix}, \qquad M_2' :=
    \begin{pmatrix}
        0 & 0 & 1 & 0 & 0 \\
        0 & \frac{m_2}{h^2} & {\frac{m_1}{2 h^2}} & 0 & 0 \\
        1 & {\frac{m_1}{2 h^2}} & \frac{2m_2}{h^2} & 0 & 0 \\
        0 & 0 & 0 & 0 & 0 \\
        0 & 0 & 0 & 0 & 0 \\
    \end{pmatrix}, \\
    S' & := \begin{pmatrix}
        0 \\
        - K' \\
        0 \\
        0 \\
    \end{pmatrix} = \begin{pmatrix}
        0 \\
        - \frac{m_2}{2h^2} \nabla^\perp m_1 + \frac{m_1}{2h^2} \nabla^\perp m_2 - \frac{m_2}{2h^2} J (\nabla_\mathrm{skw}m)_1 + \frac{m_1}{2h^2} J (\nabla_\mathrm{skw}m)_2 \\
        0 \\
        0 \\
    \end{pmatrix}.
    \end{aligned}
\end{equation}

\smallskip 

Meanwhile, unlike the one dimensional flow, we will need to track the evolution of the vorticity to handle $ S' $ in the following. Applying $\curl$ to \eqref{MoMeNtUm} yields
\begin{equation}
    \label{Curled}
    \partial_t \curl u+u\cdot \nabla \curl u = - \Div u \curl u \overset{\eqref{MaSs}}{=} \frac{1}{h}(\partial_t h + u \cdot \nabla h) \curl u.
\end{equation}
Therefore, dividing \eqref{Curled} with $ h $ and re-arranging the resultant equation lead to the specific vorticity formula \cite{BreschDesjardinsMetivier,ShkollerVicol,ChenCialdeaShkollerVicol},
\begin{equation}
    \label{CurledTransport}
    \partial_t \theta +u\cdot \nabla \theta =0,
\end{equation}
where
\begin{equation}
    \label{ThetaVariable}
    \theta:=\frac{\curl u}{h}.
\end{equation}

\smallskip 
In addition, 
consider the energy functional
\begin{equation}
    \label{Energy-2d}
    \mathcal{E}_2(t):=\sum_{\substack{i+ j\leq 3, \\ i,j \geq 0}} \norm{\partial_t^i \nabla^{j}(h-1),\partial_t^i\nabla^j m,\partial_t^i\nabla^{j+1} h}{L^2}^2,
\end{equation}
and the intermediate energy 
\begin{equation}
    \label{intrmd-energy-2d}
    \mathfrak{E}_2(t):=\sum_{\substack{i+ j \leq 3, \\ i, j \geq 0}} (M_0\partial_t^i\nabla^j  U(t),\partial_t^i\nabla^j U(t)).
\end{equation}
By the non-degenerate condition \eqref{non-degenerate-nD}, there exists a constant $C_{\underline{h},\overline{h}}\in (0,\infty)$ such that
       \begin{equation}
       \label{Energy-equiv-2d-cauchy}
            \frac{1}{C_{\underline{h},\overline{h}}}\mathcal{E}_2(t)\leq \mathfrak{E}_2(t)\leq C_{\underline{h},\overline{h}}\mathcal{E}_2(t).
        \end{equation}

\subsection{Vorticity estimate}
\label{subsec:2d-vorticity}

Let $ \partial \in \lbrace \partial_1, \partial_2 \rbrace $ be the spatial derivative.  
For $ i + j \leq 3 $, and $ i \geq 0, \ j \geq 0 $, consider the evolution of $ \theta_{ij}:=\partial_t^i \partial^j \theta $ from \eqref{CurledTransport}, 
\begin{equation}
    \label{eq:dd-curl-2d}
    \partial_t \theta_{ij} + u \cdot \nabla \theta_{ij} = - \sum_{\substack{1\leq a \leq i, \\ 1 \leq b \leq j}} \partial_t^{a} \partial^{b}u \cdot \nabla \partial_t^{i-a}\partial^{j-b} \theta,
\end{equation}
where we have omitted the combinatorial constant on the right hand side. 
Then the standard $ L^2 $ estimate of $ \theta_{ij} $ yields
\begin{equation}
    \label{est:theta-ij}
    \begin{gathered}
    \dfrac{d}{dt}\norm{\theta_{ij}}{L^2}^2 \lesssim \norm{ \Div u}{L^\infty} \norm{\theta_{ij}}{L^2}^2 + \sum_{\substack{1\leq a \leq i, \\ 1 \leq b \leq j}} \norm{\partial_t^{a} \partial^{b}u \cdot \nabla \partial_t^{i-a}\partial^{j-b} \theta}{L^2} \norm{\theta_{ij}}{L^2}\\
    \lesssim \norm{u}{H^3} \norm{\theta_{ij}}{L^2}^2 + \sum_{\substack{i+j\leq 3, \\ i,j \geq 0}}\norm{\partial_t^i\partial^j u}{L^2} \sum_{\substack{i+j\leq 3, \\ i,j \geq 0}} \norm{\partial_t^i \partial^j \theta}{L^2}^2.
    \end{gathered}
\end{equation}
Applying Gr\"onwall's inequality the yields
\begin{equation}
    \label{est:theta-2}
\sum_{\substack{i+j\leq 3, \\ i,j \geq 0}} \norm{\partial_t^i \nabla^j \theta (t)}{L^2}  \lesssim \sum_{\substack{i+j\leq 3, \\ i,j \geq 0}} \norm{\partial_t^i \nabla^j \frac{\curl u (0)}{h(0)}}{L^2} \exp (t \sup_{0\leq s \leq t}  \sum_{\substack{i+j\leq 3, \\ i,j \geq 0}}\norm{\partial_t^i\nabla^j u(s)}{L^2} ).
\end{equation}

Meanwhile, since, $ u = \frac{m}{h} $ and, with direct calculation, 
\begin{equation}
\label{eq:theta-2}
\theta=\frac{\curl u}{h}= \frac{\curl m}{h^2}-\frac{m\cdot \nabla^{\perp}h}{h},
\end{equation}
the following lemma follows with a straightforward calculation:
\begin{lemma}[Vorticity estimate]
\label{lm:vorticity}
For a solution of \eqref{ShallowWaterMomentum2D} (or equivalently, of \eqref{ShallowWaterMomentum2D-1}) that satisfies \eqref{non-degenerate-nD}, one has that, $ \forall \ t\in (0,\infty) $,  
\begin{equation}
    \label{est:vorticity}
    \sum_{\substack{i+j\leq 3, \\ i,j \geq 0}} 
    \norm{\partial_t^i \nabla^j \curl m (t)}{L^2} 
    \lesssim \sup_{0\leq s \leq t} \mathcal P(\mathcal E_2(s)) + C_\mathrm{skw} 
    \exp(t \sup_{0\leq s \leq t} \mathcal P(\mathcal E_2(s))),
\end{equation}
where $ C_\mathrm{skw}\in(0,\infty) $ is a constant depending on 
\begin{equation}
    \label{2d-initial-bound}
    \sum_{\substack{i+j\leq 3,\\i,j \geq 0}} \norm{\partial_t^i \nabla^j \curl u(t=0),\partial_t^i \nabla^j u(t=0) , \partial_t^i \nabla^j h(t=0),\partial_t^i \nabla^{j+1} h(t=0)}{L^2} < \infty. 
\end{equation}
\end{lemma}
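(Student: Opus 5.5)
The plan is to invert the identity \eqref{eq:theta-2} for $\curl m$ and combine it with the transport estimate \eqref{est:theta-2}. Multiplying \eqref{eq:theta-2} by $h^2$ gives
\begin{equation*}
\curl m = h^2\theta + h\, m\cdot\nabla^\perp h .
\end{equation*}
Apply $\partial_t^i\nabla^j$ with $i+j\leq 3$ and expand by the Leibniz rule. This produces two groups of terms.

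The first group comes from $h^2\theta$: terms of the form $\partial_t^{a}\nabla^{b}(h^2)\,\partial_t^{i-a}\nabla^{j-b}\theta$. The top-order term $h^2\partial_t^i\nabla^j\theta$ is bounded by $4\overline h^2\norm{\partial_t^i\nabla^j\theta}{L^2}$, using \eqref{non-degenerate-nD}. For the lower-order terms, I put whichever factor carries at most one or two derivatives in $L^\infty$ or $L^4$, via the 2d Sobolev embeddings $H^2\hookrightarrow L^\infty$ and $H^1\hookrightarrow L^4$. The remaining factor is bounded by $\mathcal E_2^{1/2}$ (for the $h$ factors) or by $\sum\norm{\partial_t^i\nabla^j\theta}{L^2}$. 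All derivatives of $h$ up to total order four (counting $\partial_t$ as one) are controlled by $\mathcal E_2$, since $\mathcal E_2$ includes $\partial_t^i\nabla^{j+1}h$.

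The second group comes from $h\,m\cdot\nabla^\perp h$. Its derivatives involve at most $\partial_t^i\nabla^j m$ and $\partial_t^i\nabla^{j+1}h$, all of which appear in $\mathcal E_2$. The same product estimates therefore bound them by $\mathcal P(\mathcal E_2(t))$.

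Hence
\begin{equation*}
\sum\norm{\partial_t^i\nabla^j\curl m(t)}{L^2}\lesssim \mathcal P(\mathcal E_2(t))\Bigl(1+\sum\norm{\partial_t^i\nabla^j\theta(t)}{L^2}\Bigr).
\end{equation*}

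Next, insert \eqref{est:theta-2}. On its right-hand side, $\norm{\partial_t^i\nabla^j u}{L^2}$ with $u=m/h$ is bounded by $\mathcal P(\mathcal E_2)$ using the same Leibniz/Sobolev argument together with the lower bound on $h$. The initial quantity $\norm{\partial_t^i\nabla^j(\curl u/h)(0)}{L^2}$ is bounded by a constant depending on \eqref{2d-initial-bound}, which is $C_{\mathrm{skw}}$. The transport estimate \eqref{est:theta-ij} itself needs $\norm{\nabla\partial_t^a\partial^b u}{L^\infty}$ only at orders where Sobolev applies. I will simply invoke it as stated in \eqref{est:theta-2}, taking the supremum over $s\in[0,t]$.

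The product $\mathcal P(\mathcal E_2(t))\,C_{\mathrm{skw}}\exp(\cdots)$ is then absorbed into the stated form. Using $\mathcal P(\mathcal E_2(t))\leq \exp(\mathcal P(\mathcal E_2(t)))$ gives a bound of the form $C_{\mathrm{skw}}\exp\bigl((1+t)\sup_s\mathcal P(\mathcal E_2(s))\bigr)$. After enlarging $\mathcal P$ (and for $t$ bounded, or by adjusting the generic polynomial), this matches \eqref{est:vorticity}.

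The main obstacle is the top-order bookkeeping in the Leibniz expansion. In particular, terms like $\partial_t^3 h\,\theta$ or $\nabla^3 m\cdot\nabla^\perp h$ require the highest-order factor to be in $L^2$, which $\mathcal E_2$ provides, and the other factor in $L^\infty$. Then $\theta\in L^\infty$ needs $\theta\in H^2$, which is available from the $j\leq 2$ levels. Mixed terms such as $\partial_t\nabla h\cdot\nabla^2\theta$ can be handled by $L^4\times L^4$.
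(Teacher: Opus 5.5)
Your argument is the paper's own: invert \eqref{eq:theta-2}, expand by Leibniz using $H^2\hookrightarrow L^\infty$ and $H^1\hookrightarrow L^4$ in 2d, and insert the transport bound \eqref{est:theta-2}. Your derivative bookkeeping against $\mathcal E_2$ is correct. The only derivative of $h$ not controlled by $\mathcal E_2$ is $\partial_t^4 h$, and it never occurs.

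Two corrections are needed, both easy:
\begin{enumerate}
\item The inversion should read $\curl m = h^2\theta + h^{-1}\,m\cdot\nabla^\perp h$. The identity \eqref{eq:theta-2} has a typo: the last denominator should be $h^3$, not $h$. This does not affect any of your estimates.
\item Your final absorption via $\mathcal P\le e^{\mathcal P}$ produces $\exp\bigl((1+t)\sup_s\mathcal P\bigr)$. This cannot be put in the form $\exp\bigl(t\sup_s\mathcal P\bigr)$ uniformly as $t\to 0$. Instead, split the product with $ab\le\tfrac12(a^2+b^2)$ into $\tfrac12\mathcal P(\mathcal E_2(t))^2+\tfrac12 C_{\mathrm{skw}}^2\exp\bigl(2t\sup_s\mathcal P(\mathcal E_2(s))\bigr)$. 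After renaming the constant and the polynomial, this is exactly \eqref{est:vorticity}.
\end{enumerate}
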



\subsection{Energy estimate}
\label{subsec:2d-energy-est}

Now we outline the \emph{a priori} energy estimates for \eqref{ShallowWaterMomentum2D-1}. For simplicity we stay in the original variables $U = (h-1,m,\nabla h)^\top.$ 
The left-hand-side of \eqref{ShallowWaterMomentum2D-1} has the correct symmetric structure, and the first source term $S$ is of lower order. Hence, it remains to check the energy estimates of the source term $S'$. Heuristically, this is done by making the following crucial observation: Thanks to \eqref{2d-003}, $ \nabla_\mathrm{skw} m $ can be controlled by the vorticity estimate \eqref{est:vorticity}, and, with respect to the $ L^2 $ inner product, $ \nabla^\perp $ is the duality operator of $ \curl $, i.e., 
\begin{equation}
\label{duality-formula}
    \int \nabla^\perp f \cdot g \,d\vec x = - \int f \curl g \,d\vec x
\end{equation}
for $ f $ and $ g $ being scaler and vector valued functions, respectively. \eqref{duality-formula} is going to help us to close the energy estimates.
\smallskip

We will only present the high order estimate. From \eqref{ShallowWaterMomentum2D-1}, one can write down
the systems for $U_{ij}:=\partial_t^i\partial^j U$, $ i + j = 3 $, $ i,j \geq 0 $,
\begin{equation}
\label{sys:dd-2d}
  \underbrace{M_0\partial_t U_{ij} +M_1'\partial_1 U_{ij}+M_2'\partial_2 U_{ij}+M_3\partial_1^2 U_{ij} +M_4 \partial_2^2 U_{ij}+M_5\partial_1\partial_2 U_{ij} =\mathcal{G}_{ij}}_{\text{symmetric part}}+\underbrace{\mathcal{G}'_{ij}}_{\mathclap{\text{non-symmetric part}}}, \\
\end{equation}
where 
\begin{equation}
    \begin{aligned}
      &  \mathcal{G}_{30}:= \partial_t^3 S-3\partial_t M_0\partial_t^3U-3\partial_t^2 M_0\partial_t^2 U-\partial_t^3 M_0\partial_tU-3\partial_tM_1\partial_t^2\partial_1 U-3\partial_t^2 M_1\partial_t \partial_1 U\\
      & -\partial_t^3 M_1 \partial_1 U-3\partial_t M_2\partial_t^2 \partial_2 U-3\partial_t^2 M_2\partial_t \partial_2U-\partial_t^3 M_2\partial_2U,\\
      & \mathcal{G}_{21}:= \partial_{t}^2 \partial  S-\partial M_0\partial_t^3 U-2\partial \partial_t M_0\partial_t^2 U-\partial \partial_t^2 M_0\partial_t U-\partial_t^2 M_0\partial_t\partial  U-2\partial_tM_0\partial_t^2 \partial  U \\
      &-\partial M_1\partial_t^2\partial_1 U-2\partial \partial_t M_1\partial_t\partial_1 U -\partial \partial_t^2M_1\partial_1 U-\partial_t^2 M_1\partial_1\partial  U-2\partial_tM_1\partial_t\partial_1\partial  U \\
      &-\partial  M_2\partial_t^2\partial_2 U-2\partial \partial_t M_2 \partial_t\partial_2 U- \partial  \partial_t^2 M_2\partial_2 U-\partial_t^2 M_2\partial_2\partial U-2\partial_tM_2\partial_t\partial_2\partial  U,\\
      & \mathcal{G}_{12}:= \partial_t \partial^2 S-\partial M_0\partial_t^2 \partial  U-2\partial \partial_t M_0\partial_t\partial  U-\partial^2 \partial_t M_0 \partial  U-\partial_t M_0\partial^2\partial_t U \\
      & -2\partial M_0\partial \partial_t^2 U-\partial M_1\partial_t\partial \partial_1 U-2\partial_t\partial  M_1\partial_1\partial  U-\partial^2 \partial_t M_1\partial_1 U-\partial_tM_1\partial^2\partial_1 U\\
      &-2\partial  M_1\partial \partial_t\partial_1 U-\partial M_2\partial_t\partial \partial_2 U-2\partial_t\partial  M_2\partial_2\partial  U-\partial^2 \partial_t M_2\partial_2 U\\
      &-\partial_tM_2\partial^2\partial_2 U-2\partial  M_2\partial \partial_t\partial_2 U, \\
      & \mathcal{G}_{03}:= \partial^3 S-3\partial  M_0\partial^2 \partial_t U-3\partial^2 M_0\partial \partial_t U -\partial^3 M_0\partial_t U-3\partial  M_1\partial^2\partial_1 U \\
      &-3\partial^2 M_1 \partial  \partial_1 U -\partial^3 M_1 \partial_1 U-3\partial  M_2\partial^2 \partial_2 U-3\partial^2 M_2\partial  \partial_2 U-\partial^3 M_2\partial_2 U,\\
    \end{aligned}
\end{equation}
and 
\begin{equation}
    \mathcal{G}'_{30}:=\partial_t^3 S', \quad \mathcal{G}_{21}':=\partial_t^2 \partial S', \quad \mathcal{G}'_{12}:=\partial_t \partial^2 S', \quad \mathcal{G}'_{03}:=\partial^3 S'.
\end{equation}
Here we have written \eqref{sys:dd-2d} into the symmetric part and the non-symmetric part, where the symmetric part can be handled similar to \eqref{sys:dd-V}--\eqref{est:nonlinear-006}. To be more precise, taking the $ L^2 $-inner product of \eqref{sys:dd-2d} with $ U_{ij} $, and applying integration by parts, H\"older's inequality, and Sobolev embedding inequality, lead to 
\begin{equation}
\label{est:2d-U-ij}
    \frac{1}{2}\dfrac{d}{dt}(M_0 U_{ij}, U_{ij}) \lesssim \mathcal P(\mathcal E_2(t)^{\frac{1}{2}}) \norm{U_{ij}}{L^2}^2 + (\mathcal G_{ij}', U_{ij}). 
\end{equation}

It remains to calculate $ (\mathcal G_{ij}', U_{ij}) $. We will again only focus on the high order terms. Let  $\partial_H\in \{\partial_t^3,\partial_t^2 \partial ,\partial_t\partial^2, \partial^3\}$. Then from \eqref{def:M-2d-1}, one can check that the high order integral in $ (\mathcal G_{ij}', U_{ij}) $ reads
\begin{equation}
\begin{aligned}
\label{2d-high-energy}
   & \int \frac{\partial_H m}{2h^2}\cdot \bigg[-m_2\nabla^{\perp}\partial_H m_1+m_1\nabla^{\perp} \partial_H m_2-m_2 \partial_HJ(\nabla_{\mathrm{skw}}m)_1+m_1\partial_H J(\nabla_{\mathrm{skw}}m)_2  \\
   & -\partial_H m_2 \nabla^{\perp}m_1+\partial_H m_1\nabla^{\perp}m_2-\partial_H m_2 J(\nabla_{\mathrm{skw}}m)_1+\partial_H m_1J(\nabla_{\mathrm{skw}}m)_2 \bigg]d\vec{x} \\
   & \overset{\eqref{duality-formula}}{=}\int \frac{\partial_Hm}{2h^2} \cdot \bigg[-m_2 \partial_HJ(\nabla_{\mathrm{skw}}m)_1+m_1\partial_H J(\nabla_{\mathrm{skw}}m)_2-\partial_H m_2 J(\nabla_{\mathrm{skw}}m)_1+\partial_H m_1J(\nabla_{\mathrm{skw}}m)_2 \bigg] d\vec{x} \\
   &+\int \frac{1}{2h^2} \left[ m_2\partial_H m_1\curl\partial_H m-m_1 \partial_H m_2 \curl\partial_H m \right] d\vec{x} + \mathrm{l.o.t} \\
   & \lesssim \mathcal E_2^2 \times (\norm{\curl \partial_t^3 m}{L^2}+\norm{\curl \partial_t^2 m}{H^1}+\norm{\curl \partial_t m}{H^2}+\norm{\curl m}{H^3}) + \mathrm{l.o.t},
\end{aligned}
\end{equation}
thanks to the duality \eqref{duality-formula}. 
Here the lower order integral $ \mathrm{l.o.t} $ can be treated similarly as before. 

\smallskip 

Collecting all estimates above as well as the lower order estimates, using \eqref{est:vorticity} from Lemma \ref{lm:vorticity}, we conclude that
\begin{equation}
    \label{energy-ineq-2d-3}
    \frac{d}{dt}\mathfrak{E}_2(t)\leq C_{\underline h,\overline{h}} \mathcal P(\mathfrak{E}_2(t)^{\frac{1}{2}}) \bigl\lbrack 1 + C_\mathrm{skw}
    \exp(t \sup_{0\leq s \leq t} \mathcal P(\mathfrak E_2(s)^{\frac{1}{2}})) \bigr\rbrack,
    \end{equation}
where we have used \eqref{Energy-equiv-2d-cauchy}.

Therefore, similarly to \eqref{initial_data_time} and \eqref{2d-initial-bound}, one can check that given sufficient regularity $(h_0-1,m_0)\in H^7(\mathbb{R})\times H^6(\mathbb{R}^2)$, we can close both the temporal and spatial energy estimates in dimension two. Hence, we have shown the following: 
\begin{proposition}
    Under the assumption \eqref{non-degenerate-nD}, there exists the solution to \eqref{ShallowWaterMomentum2D} satisfies
    \begin{equation}
        \sup_{0\leq t\leq T} \mathfrak{E}_2(t)\leq 2\mathfrak{E}_2(0),
        \end{equation}
        and, thanks to \eqref{Energy-equiv-2d-cauchy}, also satisfies        \begin{equation}
          \sup_{0\leq t\leq T}  \mathcal{E}_2(t)\leq C_{\underline{h},\overline{h}} \mathcal{E}_2(0),
        \end{equation}
        for some $T\in (0,\infty)$ and some $C_{\underline{h},\overline{h}}\in (0,\infty) $ depending only on the initial data and $\underline{h}$, $\overline{h}$.
\end{proposition}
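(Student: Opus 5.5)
The plan is to integrate the differential inequality \eqref{energy-ineq-2d-3} by a continuity (bootstrap) argument, coupled with the justification of \eqref{non-degenerate-nD} as in Proposition \ref{prop:a-priori-assumption}.

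\emph{Initial data.} First, check that $\mathfrak E_2(0)<\infty$ and that $C_\mathrm{skw}<\infty$. As in \eqref{initial_data_time}, the time derivatives $\partial_t^i U(0)$, $i\le 3$, are expressed through spatial derivatives by repeatedly substituting \eqref{ShallowWaterMomentum}. Each $\partial_t$ costs up to three spatial derivatives on $h$ (through $\nabla\Delta h$) and two on $m$. Hence $(h_0-1,m_0)\in H^7\times H^6$ gives finiteness of \eqref{Energy-2d} and \eqref{2d-initial-bound} at $t=0$, including $\partial_t^i\nabla^j\curl u(0)$ with $i+j\le 3$. I would only verify the worst term, $\partial_t^3$, by counting orders rather than writing it out.

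\emph{Bootstrap.} Define $T^*$ as the supremum of times $T\le 1$ for which both of the following hold on $[0,T]$: $\sup\mathfrak E_2\le 2\mathfrak E_2(0)$, and \eqref{non-degenerate-nD}. This set is nonempty and closed by continuity of the energy. On $[0,T^*]$, \eqref{energy-ineq-2d-3} gives
\[
\frac{d}{dt}\mathfrak E_2\le C_{\underline h,\overline h}\,\mathcal P\bigl((2\mathfrak E_2(0))^{\frac12}\bigr)\bigl[1+C_\mathrm{skw}\exp\bigl(\mathcal P((2\mathfrak E_2(0))^{\frac12})\bigr)\bigr]=:\Lambda .
\]
Here $t\le 1$ is used inside the exponential. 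Integrating yields $\mathfrak E_2(t)\le \mathfrak E_2(0)+t\Lambda$, so for $t\le \mathfrak E_2(0)/(2\Lambda)$ we get the strict bound $\mathfrak E_2(t)\le \tfrac32\mathfrak E_2(0)$.

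\emph{Non-degeneracy.} Next, improve \eqref{non-degenerate-nD} as in \eqref{est:h-002}. In 2D, $\|\partial_t h\|_{L^\infty}\lesssim\|\partial_t h\|_{H^2}\lesssim \mathcal E_2^{1/2}$, and this is available since $i+j\le3$ includes $\partial_t\nabla^2 h$. Therefore $h(t)\in[\tfrac34\underline h,\tfrac32\overline h]$ for $t\lesssim \mathcal E_2(0)^{-1/2}$.

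\emph{Conclusion.} Taking $T$ as the minimum of these times, continuity shows $T^*\ge T$, which gives the first claim. The second claim follows from \eqref{Energy-equiv-2d-cauchy}.

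The main obstacle is the legitimacy of \eqref{energy-ineq-2d-3} itself. Specifically, $C_\mathrm{skw}$ is evaluated only at $t=0$, while Lemma \ref{lm:vorticity} controls $\partial_t^i\nabla^j\curl m$ via $\theta$ and \eqref{eq:theta-2}. I would double-check that the terms $\nabla^\perp h$ and $\partial_t^i$ of $m$ and $h$ generated there are bounded by $\mathcal P(\mathcal E_2)$. Top-order terms like $\partial_t^3\nabla h$ are indeed in $\mathcal E_2$. I would also ensure that the $\exp$ factor stays bounded on $[0,1]$ under the bootstrap hypothesis, so that the estimate does not become circular.
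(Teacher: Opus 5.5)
Your proposal is correct and follows the paper's route. The paper derives \eqref{energy-ineq-2d-3} from the symmetric reformulation, Lemma~\ref{lm:vorticity} and the duality \eqref{duality-formula}, then simply asserts that it closes for short time when $(h_0-1,m_0)\in H^7\times H^6$; your continuity argument, with $t\le 1$ to freeze the exponential factor, makes that step explicit. One correction to your initial-data count: $\partial_t^3\curl u(0)\in L^2$ does not follow from the rule of two derivatives per $\partial_t$ applied to $\curl\partial_t^3 u$, which would demand one more derivative on the data, but from differentiating the transport equation \eqref{Curled}, in which $\curl$ has already removed $\nabla h-\nabla\Delta h$.
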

Meanwhile, \eqref{non-degenerate-nD} can be closed following the arguments as in Proposition \ref{prop:a-priori-assumption}. Thus repeating the vanishing viscosity arguments as in Section \ref{vv}, we have shown the following: 
\begin{proposition}
\label{prop:2d-local-wellposedness}
    Consider initial data $(h_0-1,m_0)\in H^7(\mathbb{R}) \times H^6(\mathbb{R}^2)$. There exists some time $T\in (0,\infty)$ such that there exists a unique local-in-time solution $(h-1,m)\in L^{\infty}([0,T];H^4(\mathbb{R}))\times L^{\infty}([0,T];H^3(\mathbb{R}^2))$ to \eqref{ShallowWaterMomentum2D}, which satisfies
    \begin{equation}
        \sup_{0\leq t\leq T}\sum_{\substack{i+\vert\alpha\vert\leq 3, \\ i,\vert\alpha\vert\geq 0}} \norm{\partial_t^i\partial^{\alpha}(h-1),\partial_t^i\partial^{\alpha} m, \partial_t^i\partial^{\alpha}\nabla h}{L^2}^2 \leq C_{\underline{h},\overline{h}}\mathcal{E}_2(0),
    \end{equation}
    for some constant $C_{\underline{h},\overline{h}}\in (0,\infty)$ depending only on $\underline{h},$ $\overline{h}$. Moreover, following a calculation similar to \eqref{initial_data_time}, we have
    \begin{equation}
        \mathcal{E}_2(0)\lesssim \mathcal{P} (\norm{h_0-1}{H^7},\norm{m_0}{H^6}).
    \end{equation}
\end{proposition}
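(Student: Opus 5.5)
The plan is to follow the 1d scheme of Sections \ref{vv}--\ref{Sect:WP}, now with the 2d \emph{a priori} estimate (the preceding Proposition together with Lemma \ref{lm:vorticity}) as the uniform input.

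\textbf{Step 1: regularized generalized system.} As in \eqref{ShallowWaterV}, I replace $\nabla h$ by an independent unknown $\phi\in\mathbb R^2$ and set $V=(\eta-1,\omega,\phi)$, so that $(h,m,\nabla h)\mapsto(\eta,\omega,\phi)$ in \eqref{ShallowWaterMomentum2D-1}. I then regularize as in \eqref{ShallowWaterMomentumgeneralized}.
\begin{itemize}
\item $\eta$ is replaced by $\langle\eta\rangle_\varepsilon$ in $M_0,M_1',M_2',S,S'$.
\item The viscous term $-\varepsilon\Delta V$ is added.
\item The data are $V_0^\varepsilon$, with $\varepsilon(\eta_0^\varepsilon-1)\to0$ in $H^8$ and the remaining components converging in $H^7\times H^7\times H^6$.
\end{itemize}
This choice keeps the time-derivative energies at $t=0$ bounded uniformly in $\varepsilon$; computing them is analogous to \eqref{initial_data_time}, and up to three time derivatives cost six spatial derivatives of $m_0$ and seven of $h_0$. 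For each fixed $\varepsilon$ the system is uniformly parabolic, so \cite{Taylor} gives a local solution.

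\textbf{Step 2: uniform-in-$\varepsilon$ estimates.} The symmetric part is handled as in Lemma \ref{LemmaPhi} and \eqref{est:2d-U-ij}. The viscous term only contributes a signed term $-\varepsilon\|\nabla U_{ij}\|^2\le0$. The non-symmetric source $S'$ is treated through the duality \eqref{duality-formula}, as in \eqref{2d-high-energy}, which reduces it to vorticity.

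\textbf{Main obstacle.} The vorticity transport \eqref{CurledTransport} holds only when $\phi=\nabla\eta$ and there is no viscosity. For the regularized generalized system, $\theta$ instead satisfies a transport--diffusion equation with commutator forcing.

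My first attempt is to avoid regularizing at the vorticity level. I would estimate $\curl\omega$ directly from the $\omega$-equation: take $\curl$ of the second component. The surface-tension term $\eta\nabla\Div\phi$ does not vanish under $\curl$ unless $\phi$ is a gradient. To handle this, I note that the $\phi$-equation $\partial_t\phi+\nabla\Div\omega-\varepsilon\Delta\phi=0$ forces $\curl\phi$ to solve a heat equation with zero data, because $\phi_0=\nabla\eta_0$. Hence $\curl\phi\equiv0$ and $\phi$ stays a gradient. Using this, the commutator terms in the $\theta$-equation are of lower order, and the $\varepsilon\Delta\theta$ term gives a signed contribution plus $\varepsilon$-weighted commutators. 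These are absorbed by the dissipation, yielding \eqref{est:vorticity} uniformly in $\varepsilon$.

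Inserting this into \eqref{energy-ineq-2d-3}, and closing \eqref{non-degenerate-nD} as in Proposition \ref{prop:a-priori-assumption}, gives a time $T$ and a bound on $\mathcal E_2^\varepsilon$, both independent of $\varepsilon$.

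\textbf{Step 3: limit, uniqueness, and identification.}
\begin{itemize}
\item \emph{Limit.} Weak-$*$ compactness in $L^\infty H^3$ combined with Aubin--Lions, as in Section \ref{Ex-U}, yields a limit solving the generalized system.
\item \emph{Uniqueness.} This follows from the $L^2$ difference estimate of Section \ref{subsec:well-posedness-generalized}. The skew term $K'$ in the difference equation is estimated using $H^3$ bounds on the background solution: the coefficients are differenced and the derivative falls on the smooth solution, so no loss occurs at the $L^2$ level. This gives \eqref{ContDepend}.
\item \emph{Identification.} Since $\partial_t(\phi-\nabla\eta)=0$ and $\phi_0=\nabla\eta_0$, we get $\phi=\nabla\eta$. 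Hence $(h,m)=(\eta,\omega)$ solves \eqref{ShallowWaterMomentum2D}.
\item \emph{Data bound.} The estimate $\mathcal E_2(0)\lesssim\mathcal P(\|h_0-1\|_{H^7},\|m_0\|_{H^6})$ follows by expressing $\partial_t^iU(0)$ through the equation.
\end{itemize}
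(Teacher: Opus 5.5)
Your overall architecture matches the paper's. The paper's proof of this proposition is only the sentence ``close \eqref{non-degenerate-nD} as in Proposition \ref{prop:a-priori-assumption} and repeat Section \ref{vv}''. However, the uniqueness step fails as you justify it. In the difference equation for $\delta\mathcal V=\mathcal V-\tilde{\mathcal V}$, you cannot make every derivative fall on the background. The difference $K'(\mathcal V)-K'(\tilde{\mathcal V})$ unavoidably contains $\frac{\tilde m_2}{2\tilde h^2}J\partial_1\delta m-\frac{\tilde m_1}{2\tilde h^2}J\partial_2\delta m$. Because $J$ is antisymmetric, this term does not integrate away against $\delta m$. By \eqref{2d-002} and \eqref{duality-formula}, its pairing with $\delta m$ equals $\int \tilde h^{-2}(\tilde m^{\perp}\cdot\delta m)\,\curl\delta m\,d\vec x$, up to sign and terms bounded by $C\|\delta m\|_{L^2}^2$. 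This quantity grows linearly with the frequency of $\delta m$, so it is not controlled by $\|\delta\mathcal V\|_{L^2}^2$. It is exactly the loss that forced Lemma \ref{lm:vorticity} in the a priori bound, and it needs the same cure:
\begin{itemize}
\item add $\|\delta\theta\|_{L^2}^2$ to the difference energy;
\item propagate it with $\partial_t\delta\theta+u\cdot\nabla\delta\theta=-\delta u\cdot\nabla\tilde\theta$, obtained from \eqref{CurledTransport}, where $\nabla\tilde\theta\in L^\infty$ follows from the $H^3$ vorticity bound;
\item recover $\curl\delta m$ from $\delta\theta,\delta h,\nabla\delta h,\delta m$ through \eqref{eq:theta-2}.
\end{itemize}

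Your vorticity step also needs repair, because the obstruction is misidentified. The term $\curl(\eta\nabla\Div\phi)=\partial_1\eta\,\partial_2\Div\phi-\partial_2\eta\,\partial_1\Div\phi$ does not vanish even when $\phi$ is a gradient; at the top level it involves $\nabla^6\eta$. So estimating $\curl\omega$ directly from the $\omega$-equation loses derivatives whatever $\phi$ is. You must pass to $u=\omega/\eta$ (or $\omega/\langle\eta\rangle_\varepsilon$), where surface tension is the pure gradient $\nabla\Div\phi$. In that formulation, the source $S$ (built with $\phi$ in place of $\nabla\eta$) cancels against $u\cdot\nabla u$ only up to the force $(u\cdot(\phi-\nabla\eta))u$. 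The curl of this force loses derivatives unless $\phi\equiv\nabla\eta$. So the invariant you need is $\phi\equiv\nabla\eta$, not $\curl\phi\equiv0$. It does hold: with the same viscosity on $\eta$ and $\phi$ and $\phi_0^\varepsilon=\nabla\eta_0^\varepsilon$, the difference $\phi-\nabla\eta$ solves the heat equation with zero data.

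Moreover, the $\varepsilon$-terms left in the $\theta$-equation are not lower order. They come from $\varepsilon\Delta\eta$ in the mass equation and $\varepsilon\Delta\omega$ in the momentum equation, and contain $\varepsilon\,u\,\nabla\Delta\eta$, i.e.\ $\varepsilon\nabla^6\eta$ at the level of $\partial^3\theta$. After one integration by parts they are bounded only by $\varepsilon\|\nabla\partial^3\theta\|_{L^2}\|\nabla^5\eta\|_{L^2}$. Absorbing this requires the energy dissipation $\varepsilon\|\nabla\partial^3\phi\|_{L^2}^2$ as well as the $\theta$-dissipation. Hence the vorticity and energy estimates must be closed jointly, not by a separate Gr\"onwall argument for $\theta$ as in \eqref{est:theta-2}.

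A minor slip: since $m_0\in H^6$, $\omega_0^\varepsilon$ can converge to $m_0$ only in $H^6$, not $H^7$.
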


\section{Remark on the regularity \eqref{n-d-initial-regularity} for 2-dimensional flows}
\label{Regularity-remark}
The purpose of Section \ref{Regularity-remark} is to demonstrate the optimal regularity argument in Section \ref{Sect:WP} fails in dimension two. Namely, we cannot relax the regularity of the initial data from $(h_0 -1, u_0) \in H^7(\mathbb{R})\times H^6(\mathbb{R}^2)$ to $(h_0 -1, u_0) \in H^4(\mathbb{R})\times H^3(\mathbb{R}^2)$. The integer Sobolev index corresponding to $n=2$ is $s=3$: The corresponding evolution energy and elliptic energy are
\begin{equation}
    \label{EnergyEvolution-Odd}
    \mathcal{F}_{\text{bad}} (t):=\sum_{\substack{0\leq i\leq 1, \\ 0\leq i+\vert\alpha\vert\leq 2}}\norm{\partial_t^i\partial^{\alpha}(h-1),\frac{\partial_t^i \partial^{\alpha} m}{\sqrt{h}},\partial_t^i \partial^{\alpha}  \nabla h}{L^2}^2,
\end{equation}
and 
\begin{equation}
    \label{EnergyElliptic-Odd}
    \mathfrak{F}_{\text{bad}}(t):=\sum_{0\leq \vert \alpha \vert \leq 3} \norm{\partial^{\alpha}(h-1),\frac{\partial^{\alpha}m}{\sqrt{h}}, \partial^{\alpha}\nabla h}{L^2}^2,
\end{equation}
respectively. To integrate by parts in the energy estimates, one needs to compute, for $\vert \alpha \vert=1$,
\begin{equation}
    \partial_t\partial^{\alpha} \left(\frac{1}{h}\partial_t m\right) \partial_t\partial^{\alpha} m=[\underbrace{\partial^{\alpha} \left(\frac{1}{h}\right)}_{\in L^{\infty}} \boxed{\partial_t^2 m} ]\underbrace{\partial_t\partial^{\alpha} m}_{\in L^2} +\cdots .
\end{equation}
We require $\partial_t^2m\in L_x^2$: A direct calculation using \eqref{ShallowWaterMomentumMomentum} gives
\begin{equation}
\label{2-time-diff-momentum}
\begin{aligned}
   & \frac{1}{h}{\partial_t^2 m}+\cdots 
   ={\partial_t \nabla \Delta h}+\cdots . 
\end{aligned}
\end{equation}
However, the term $\partial_t \nabla\Delta h$ is not in the evolution energy \eqref{EnergyEvolution-Odd}. Consequently, $\mathcal{F}_{\text{bad}}$ and $\mathfrak{F}_{\text{bad}}$ cannot satisfy an inequality of the form \eqref{EllipticEstimate1} due to a loss of regularity. Hence, we conclude that it is not possible to loosen the regularity \eqref{n-d-initial-regularity} to the optimal regularity $(h_0-1,m_0)\in H^4(\mathbb{R})\times H^3(\mathbb{R}^2)$ without imposing additional regularity assumptions on $m_0$.
\begin{remark}
    We do not try to optimize the regularity as in Section \ref{Sect:WP} for dimensional higher than $ 1 $ hereafter.  
\end{remark}
\section{$ n $-dimensional irrotational flows, $ n \geq 3 $}
\label{sec:irrotational-flow}

For flows in dimensions $ 3 $ or larger, one can still write down the system \eqref{ShallowWaterEquations} as a sum of the symmetric and non-symmetric parts as in \eqref{ShallowWaterMomentum2D-1}. However, the corresponding vorticity does not satisfy a transport equation similar to \eqref{CurledTransport}. 

To be more precise,  let $ J_{ij}, I_{ij} \in \mathbb R^{n\times n} $, $ 1\leq i,j \leq n $, be matrices defined by
\begin{equation}
    \label{def:j-ij}
    (J_{ij})_{lk} := - \delta_{li}\delta_{kj} +  \delta_{ki}\delta_{lj},
    \qquad (I_{ij})_{lk} := \delta_{li}\delta_{kj} +  \delta_{ki}\delta_{lj}.
\end{equation}

\smallskip 

In analogy to \eqref{def:2d-non-sym}, we focus on the term 
\begin{equation}
    \label{def:n-d-non-symmetry}
    K:= \frac{m}{h^2} \cdot \nabla m + \frac{m}{h^2} \Div m =  \sum_{i=1}^n B_i \partial_i m,
\end{equation}
where 
\begin{equation}
    \label{def:B-i}
    (B_i)_{lk} = \frac{m_i}{h^2} \delta_{lk}  + 
        \frac{m_l}{h^2} \delta_{ki}.
\end{equation}
Then one can write
\begin{equation}
    \label{n-d-001}
    B_i = B_\mathrm{i,sym} + \sum_{j=1}^n A_{ij},
\end{equation}
where
\begin{equation}
    \label{n-d-002}
    \begin{aligned}
        B_\mathrm{i,sym}&:= \frac{m_i}{h^2} \mathbb I_n +
        \sum_{j=1}^n\frac{m_j}{2 h^2} I_{ij}, \\
        A_{ij} & :=
        \frac{m_j}{2 h^2} J_{ij}. 
    \end{aligned}
\end{equation}
Indeed,
\begin{align*}
    ( I_{ij} + J_{ij} )_{lk} & = 2 \delta_{ki} \delta_{lj}, \\
    \intertext{by definition, and therefore}
    (B_\mathrm{i,sym} + \sum_{j=1}^n A_{ij})_{lk} & = \frac{m_i}{h^2} \delta_{lk} + \sum_{j=1}^n \frac{m_j}{2h^2} (I_{ij} + J_{ij})_{lk} \\
    & = \frac{m_i}{h^2} \delta_{lk} + \sum_{j=1}^n \frac{m_j}{2h^2} 2 \delta_{ki}\delta_{lj} 
    = \frac{m_i}{h^2}\delta_{lk} + \frac{m_l}{h^2}\delta_{ki} \overset{\eqref{def:B-i}}{=} (B_i)_{lk}.
\end{align*}
Therefore, substituting \eqref{n-d-001} and \eqref{n-d-002} into \eqref{def:n-d-non-symmetry} yields 
\begin{equation}
    \label{n-d-003}
    \begin{aligned}
    K & = \sum_{i=1}^n B_\mathrm{i,sym} \partial_i m + \sum_{i,j=1}^n \frac{m_j}{2h^2} J_{ij} \partial_i m \\ & = \sum_{i=1}^n B_\mathrm{i,sym} \partial_i m + \sum_{i,j=1}^n \frac{m_j}{2h^2} J_{ij} \underbrace{(\partial_i m - \nabla m_i)}_{= (\nabla_\mathrm{skw} m)_i }  + \sum_{i,j=1}^n \frac{m_j}{2h^2} J_{ij} \nabla m_i,
    \end{aligned}
\end{equation}
where
\begin{equation}
    \label{n-d-005}
    J_{ij} \nabla \cdot m = - \partial_j m_i + \partial_i m_j = (\nabla_\mathrm{skw} m)_{ij},
\end{equation}
is the $ ij $-th component of the anti-symmetric gradient. 
In particular, \eqref{n-d-005} implies the duality
\begin{equation}
    \label{duality-n-d}
    \int J_{ij}\nabla f \cdot g \,d\vec x = - \int f (\nabla_\mathrm{skw} g)_{ij} \,d\vec x,
\end{equation}
in analogy to \eqref{duality-formula}.

Now we are ready to write the system \eqref{ShallowWaterMomentum} into symmetric form in higher dimensions. Let $ U := (h-1, m, \nabla h) = (h-1, m_1, m_2, \cdots, m_n, \partial_1 h, \partial_2 h, \cdots , \partial_n h) \in \mathbb R^{2n+1} $. Write, for $ i,j = 1,2,\cdots , n $,  
\begin{equation}
    \label{def:coefficient}
    \begin{aligned}
        M_0 &:= \diag (1, \frac{1}{h}\mathbb I_n, \mathbb I_n), &
        M_i &:= \begin{pmatrix}
            0 & \vec e_i^\top & 0\\ 
            \vec e_i & B_\mathrm{i,sym} & 0 \\
            0 & 0 & 0 \times \mathbb I_n
        \end{pmatrix}, \\
        M_{ij} &:= \begin{pmatrix}
            0 & 0 & 0 \\
            0 & 0 & M_{ij}'' \\
            0 & M_{ij}' & 0
        \end{pmatrix}, & & 
    \end{aligned}
\end{equation}
with 
\begin{equation}
    \label{def:m-ij'''}
    (M_{ij}')_{lk} := \delta_{li}\delta_{kj} \ \text{for}\ l, k = 1,2,\cdots n, \quad \text{and}  \quad M_{ij}'' = - (M_{ij}')^\top,
\end{equation}
and 
\begin{equation}
    \label{def:source}
    S := \begin{pmatrix}
    0 \\ \frac{(m\cdot\nabla)h}{h^3}  m \\ 0
    \end{pmatrix}, \qquad 
    S'':= \begin{pmatrix}
        0 \\ 
        - \sum_{i,j=1}^n \frac{m_j}{2h^2} J_{ij}  (\nabla_\mathrm{skw} m)_i - \sum_{i,j=1}^n \frac{m_j}{2h^2} J_{ij} \nabla m_i  \\
        0
    \end{pmatrix}.
\end{equation}
It is easy to verify that $ M_i $, $ i = 0,1,2,\cdots n $, are symmetric matrices and $ M_{ij} $, $ i,j = 1,2,\cdots n $ are constant anti-symmetric matrices. Then the system \eqref{ShallowWaterMomentum} can be written as
\begin{equation}
    \label{n-d-006}
    \underbrace{M_0 \partial_t U + \sum_{i=1}^n M_i \partial_i U + \sum_{i,j =1}^n M_{ij} \partial_{ij} U = S}_{\text{symmetric part}} \qquad  + \underbrace{S''}_{\mathclap{\text{non-symmetric part}}}. 
\end{equation}
While the symmetric part can be handled as in the one and two dimensional cases, the non-symmetric part requires the estimates of 
\begin{equation}
\label{id:m-2-u-vorticity}
    \nabla_\mathrm{skw} m = h {\nabla_\mathrm{skw} u} + u (\nabla h)^\top - \nabla h u^\top. 
\end{equation}
Meanwhile, from \eqref{MoMeNtUm}, one can calculate, for $ \forall \ l,k = 1,2,\cdots, n $, 
\begin{gather*}
    \partial_t (\partial_k u_l - \partial_l u_k) = - \partial_k (u\cdot \nabla u_l) + \partial_l (u\cdot \nabla u_k) \\ = - u \cdot \nabla (\partial_k u_l - \partial_l u_k) - \sum_{j=1}^n ( \partial_k u_j \partial_j u_l - \partial_l u_j \partial_j u_k) \\
    = - u \cdot \nabla (\partial_k u_l - \partial_l u_k) - \sum_{j=1}^n \lbrack  (\partial_k u_j - \partial_j u_k) \partial_j u_l + (\partial_j u_l - \partial_l u_j) \partial_j u_k \rbrack.
\end{gather*}
That is,
\begin{equation}
\label{CurledTransport-n-d-before}
    \partial_t \nabla_\mathrm{skw} u + u \cdot \nabla \nabla_\mathrm{skw} u= - \nabla_\mathrm{skw} u (\nabla u)^\top - \nabla u \nabla_\mathrm{skw} u.
\end{equation}
In particular, if $ \nabla_\mathrm{skw} u(t=0)=0 $, one can conclude from \eqref{CurledTransport-n-d-before} that, for all $ \forall \ t \geq 0 $,
\begin{equation}
    \label{irrotation}
    \nabla_\mathrm{skw} u(t) \equiv 0.
\end{equation}
Thus, from \eqref{id:m-2-u-vorticity} and \eqref{irrotation}, one can conclude that 
\begin{equation}
\label{est:skw-m}
    \nabla_\mathrm{skw}m = u (\nabla h)^\top - \nabla h u^\top \simeq U^2,
\end{equation}
where no loss of regularity appears. 
Thus, repeating the arguments as in Section \ref{sec:2d} leads to the following:
\begin{proposition}
    \label{prop:n-d}
    For $ n \geq 3 $, let 
    \begin{equation}
    \label{def:s-for-n-d}
        \frac{n}{2}+1 < s_n:= \begin{cases}
            \frac{n}{2}+ \frac{3}{2} & \text{if $ n $ is odd}, \\
            \frac{n}{2}+2 & \text{if $ n $ is even}.
        \end{cases}
    \end{equation}
    Consider initial data $ (h_0-1, u_0) \in H^{2 s_n+1}(\mathbb R) \times H^{2 s_n}(\mathbb R^n) $ for the system \eqref{ShallowWaterEquations} with $ \nabla_\mathrm{skw} u_0 = 0 $ and \eqref{h_initial-bound}. Then there exists some time $T\in (0,\infty)$ such that there exists a unique local-in-time solution $(h-1,m)\in L^{\infty}([0,T];H^{s_n+1}(\mathbb{R}))\times L^{\infty}([0,T];H^{s_n}(\mathbb{R}^n))$ to \eqref{ShallowWaterEquations}.
\end{proposition}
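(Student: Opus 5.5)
The plan is to repeat the scheme of Section \ref{sec:2d}, with the vorticity estimate of Lemma \ref{lm:vorticity} replaced by the exact identity \eqref{est:skw-m}, which is available because irrotationality propagates.

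\textbf{Step 1: propagation of irrotationality.} From \eqref{CurledTransport-n-d-before}, $\Omega:=\nabla_\mathrm{skw}u$ solves a linear transport equation with a zeroth-order source that is linear in $\Omega$, with coefficients $u$ and $\nabla u$. For solutions in the class considered we have $\nabla u\in L^\infty$, since $s_n>\frac n2+1$. An $L^2$ estimate then gives
\[
\frac{d}{dt}\norm{\Omega}{L^2}^2\lesssim \norm{\nabla u}{L^\infty}\norm{\Omega}{L^2}^2 .
\]
Gr\"onwall's inequality together with $\Omega(0)=0$ yields \eqref{irrotation}. The same argument applies along the approximating (vanishing viscosity) sequence, or alternatively directly to the limit solution; this only affects a lower-order remainder.

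\textbf{Step 2: energy estimates.} We use the energy
\[
\mathcal E_n(t)=\sum_{i+j\le s_n}\norm{\partial_t^i\nabla^j(h-1),\partial_t^i\nabla^j m,\partial_t^i\nabla^{j+1}h}{L^2}^2 .
\]
Here $s_n$ is an integer: it equals $\frac{n+3}{2}$ for odd $n$ and $\frac n2+2$ for even $n$, and in both cases $s_n>\frac n2+1$. This ensures that $\nabla U$, $\partial_t U$ and their lower derivatives lie in $L^\infty$ by Sobolev embedding. The reason is that the integrals of the $H^{s_n-2}\subset L^\infty$-type factors, which appear exactly as in \eqref{est:nonlinearity-001}--\eqref{est:nonlinear-006}, are only closed when $s_n-1>\frac n2$.

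Apply $\partial_t^i\partial^\alpha$ with $i+|\alpha|\le s_n$ to \eqref{n-d-006}. The symmetric part is handled as in Lemma \ref{LemmaPhi}: $M_0$ and $M_i$ are symmetric, and $M_{ij}$ are constant and antisymmetric. Concretely, $\sum_{ij}(M_{ij}\partial_{ij}\Phi,\Phi)=-\sum(M_{ij}\partial_j\Phi,\partial_i\Phi)$, and summing over pairs using the structure $M_{ij}''=-(M_{ij}')^\top$ makes the total cancel. This gives the analogue of \eqref{est:2d-U-ij}.

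For $S''$, the first sum is estimated using \eqref{est:skw-m}: $\nabla_\mathrm{skw}m=u(\nabla h)^\top-\nabla h u^\top$ involves only first derivatives of $h$, which belong to the energy. So $\partial_t^i\partial^\alpha\nabla_\mathrm{skw}m$ is controlled by $\mathcal P(\mathcal E_n^{1/2})$ with no loss of derivatives. For the second sum, the top-order term $\frac{m_j}{2h^2}J_{ij}\nabla\partial_H m_i$ is paired against $\partial_H m$. We then use the duality \eqref{duality-n-d}, exactly as in \eqref{2d-high-energy}, to move the derivative onto the skew gradient $(\nabla_\mathrm{skw}\partial_H m)_{ij}$. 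The commutators with $m_j/h^2$ are of lower order. Again by \eqref{est:skw-m}, $\nabla_\mathrm{skw}\partial_Hm=\partial_H(u(\nabla h)^\top-\nabla hu^\top)$, and we must check that this is at the same regularity level as the energy.

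\textbf{Main obstacle.} The hardest point is that $\partial_H\nabla h$ together with $\partial_H u$ sits exactly at top order: $\partial_H\nabla h$ is in $L^2$ because $\nabla^{j+1}h$ is part of $\mathcal E_n$, but $\partial_H u$ needs $\partial_H m$ and $\partial_H h$. Both are in the energy, so the top-order term is bounded by $\mathcal P(\mathcal E_n^{1/2})\,\mathcal E_n^{1/2}$. This gives $\frac{d}{dt}\mathfrak E_n\le\mathcal P(\mathfrak E_n^{1/2})$, with no exponential factor in place of $C_\mathrm{skw}$.

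\textbf{Step 3: closing the argument.} The non-degeneracy condition is closed as in Proposition \ref{prop:a-priori-assumption}. Existence is obtained by the vanishing viscosity scheme of Section \ref{vv}, and uniqueness by the $L^2$ difference estimate of Section \ref{subsec:well-posedness-generalized}. Finally, as in \eqref{initial_data_time}, each time derivative costs two spatial derivatives. Hence finiteness of $\mathcal E_n(0)$ requires $(h_0-1,u_0)\in H^{2s_n+1}\times H^{2s_n}$, and the solution lies in $H^{s_n+1}\times H^{s_n}$ as stated.
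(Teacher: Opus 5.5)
Your plan is essentially the paper's argument. You propagate $\nabla_\mathrm{skw}u=0$ through the linear equation \eqref{CurledTransport-n-d-before}, use \eqref{est:skw-m} so that $\partial_H\nabla_\mathrm{skw}m=\partial_H\bigl(u(\nabla h)^\top-\nabla h\,u^\top\bigr)$ sits exactly at energy level, move the derivative in the $J_{ij}\nabla\partial_H m_i$ terms onto $\nabla_\mathrm{skw}\partial_H m$ by the duality \eqref{duality-n-d}, and otherwise repeat the symmetric energy estimates and the vanishing-viscosity construction.

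One caution, which the paper also leaves implicit: your remark that irrotationality along the approximating sequence ``only affects a lower-order remainder'' is not correct. For a non-irrotational approximation, $\partial_H\nabla_\mathrm{skw}m^\varepsilon$ contains $h^\varepsilon\,\partial_H\nabla_\mathrm{skw}u^\varepsilon$, which is one derivative above the energy, and applying Step 1 only to the limit is circular. So the regularization should be chosen to preserve $\nabla_\mathrm{skw}u=0$, e.g.\ by adding $\varepsilon\Delta h$ to \eqref{MaSs} and $\varepsilon\Delta u$ to \eqref{MoMeNtUm}, for which $\nabla_\mathrm{skw}u$ obeys a transport--diffusion equation with zero data.
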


\section*{Acknowledgments}
Part of this work was compiled during the second author's visit to the Fields Institute during the program \emph{Thematic Program on Shocks and Singularities: Nonlinear Evolution Equations in Physical and Life Sciences}. BY graciously acknowledges the financial support by the Fields Institute, and thanks the organizers for their warm hospitality. 

\section*{Conflict of Interest}
On behalf of all authors, the corresponding author states that there is no conflict of interest.

\section*{Data Availability}
Data sharing is not applicable to this article as no new data were created or analyzed in this study.

\bibliographystyle{plain}
\bibliography{main}

\end{document}